\journalname{Algorithmica}
\date{\today}
\newcommand\reals{\mathbb{R}}
\newcommand\naturals{\mathbb{N}}
\newcommand\integers{\mathbb{Z}}
\newcommand\bigo{\mathcal O}
\newcommand\abs[1]{\lvert#1\rvert}
\DeclareMathOperator\erf{erf}
\let\epsilon\varepsilon
\newcommand\dd{\mathrm d}
\newcommand{\be}{\begin{equation}}
\newcommand{\ee}{\end{equation}}
\newcommand{\ef}[1]{\, #1}
\renewcommand{\a}{\alpha}
\newcommand\rd{{\mathrm d}}
\newcommand{\reof}{\mathfrak{Re}}
\begin{document}

\author{Axel Bacher\and Andrea Sportiello}

\title
{Complexity of anticipated rejection algorithms\\
and the Darling--Mandelbrot distribution}
\titlerunning{Complexity of anticipated rejection and the Darling--Mandelbrot distribution}

\institute{Axel Bacher$^{\dagger}$, RISC, Johannes Kepler University, 
Altenbergerstraße 69, A-4040
Linz,
  \hbox{Austria.}
\quad
\email{abacher@risc.uni-linz.ac.at}\\
Andrea Sportiello$^{\ddagger}$, LIPN, University Paris Nord, 
99 av.~J.-B.\ Clément, 93430
Villetaneuse, France.
\quad
\email{andrea.sportiello@lipn.univ-paris13.fr}
\\
${\dagger}$\ 
{Supported by FWF project F050-04 (Austria).}
\\
${\ddagger}$\ 
{Supported by ANR Magnum project BLANC 0204 (France).}
}


\maketitle

\begin{abstract}
We study in limit law the complexity of some anticipated rejection random
sampling algorithms. We express this complexity in terms of a probabilistic
process, the threshold sum process. We show that, under the right conditions,
the complexity is linear and admits as a limit law a so-called
Darling--Mandelbrot distribution, studied by Darling (1952) and Lew (1994). We
also give an explicit form to the density of the Darling--Mandelbrot
distribution and derive some of its analytic properties.

\keywords{Analysis of algorithms
\and random sampling
\and anticipated rejection
\and limit distribution
\and sum of i.i.d.\ random variables
\and Darling--Mandelbrot distribution}
\end{abstract}

\section{Introduction}
\label{sec.Intro}

This paper aims at answering the following algorithmic question: consider a
program~$P$ that performs a random number of elementary operations and then
terminates. Our goal is to have~$P$ performing $n$ operations in one run. To
do that, we run the program~$P$ until it reaches~$n$ operations; if it
terminates before that, we simply restart it. The question is, how many
elementary operations must we perform to reach this goal?

Algorithms of this type are abundant in the field of \emph{random sampling},
where they are known as \emph{anticipated rejection} algorithms. Given a class
of discrete objects, a random sampling algorithm takes an integer~$n$ as input
and outputs a random object of size~$n$ according to a specific (usually
uniform) distribution. Given a random sampling algorithm for a class~$\mathcal
A$ and a subclass~$\mathcal B$ of~$\mathcal A$, an element of~$\mathcal B$ can
be sampled using a \emph{rejection} algorithm: we repeatedly sample elements
of~$\mathcal A$ until we find one in~$\mathcal B$. This algorithm can be
improved when it is possible to know in advance, during the sampling
procedure, that the drawn element is not going to be in~$\mathcal B$: we can
then prematurely reject the sample and start over, saving computing time. This
scheme is called \emph{anticipated rejection}. Assuming that sampling an
element of~$\mathcal A$ costs~$n$ elementary operations, this fits into the
framework outlined above.

Such algorithms are found for example in \cite[]{barcucci,barcucci2},
sampling prefixes of Motzkin paths (the so-called \emph{Florentine
algorithm}). Somewhat miraculously, this algorithm achieves an average linear
time complexity, as, on average, the number of necessary trials is $\mathcal
O(\sqrt n)$ and each trial costs $\mathcal O(\sqrt n)$. We show that this
phenomenon is not isolated, but rather happens in a wider range of cases.
Other algorithms of this family exist, sampling Schröder prefixes
\cite[]{penaud}, unary-binary trees \cite[]{motzkin} and constrained random
walks.

In this paper, we study the full limit distribution of the complexity of these
algorithms. This problem leads us to define a probabilistic process, the
\emph{threshold sum process}. Our main result is that, if the base
distribution has a tail with exponent~$\alpha$ in a certain range, this
process admits a limit distribution depending only on~$\alpha$. This
\emph{universality} phenomenon is reminiscent of Lévy's well-known theory of
$\alpha$-stable distributions, which also deals with sums of independent
random variables \cite[]{gnedenko}.

Surprisingly, our limiting distribution has already been studied in relation
to a different problem, namely, the ratio between the sum and the maximum
of a fixed number of i.i.d.\ random variables. It was first studied by Darling
\cite[]{darling}, then apparently by Mandelbrot in unpublished work, and by
Lew \cite[]{lew}, who named it the \emph{Darling--Mandelbrot distribution}.
This distribution has a parameter $\alpha$, with $0<\alpha<1$; it is supported
on $\reals_+$ and is defined by its characteristic function:
\begin{equation}
\phi_\alpha(s) = \frac{(-is)^{-\alpha}}{-\alpha\gamma(-\alpha,-is)}
=
\Biggl( 1-\sum_{n=1}^\infty\frac\alpha{n-\alpha}
\frac{(is)^n}{n!} \Biggr)^{-1}
\text,
\label{dm}
\end{equation}
where in the first expression, $\gamma(\cdot,\cdot)$ denotes the lower
incomplete gamma function\footnote{
Given the
  definition of the Gamma function $\Gamma(y) = \int_0^{\infty}
  x^{y-1} e^{-x} \rd x$, the upper and lower incomplete versions
are defined through the
  corresponding integrals on modified domains $\Gamma(y,z) =
  \int_z^{\infty} \cdot\;$ and $\gamma(y,z) = \Gamma(y)-\Gamma(y,z) =
  \int_0^z \cdot\;$, respectively. Non-positive real values of~$z$ are reached
by analytic continuation.}.
The second expression allows to easily extract the
moments of the distribution as rational functions of $\a$. Lew showed that the
distribution has an exponential tail; moreover, we show that its density is
non-analytic at all integer points. Both properties contrast with the Lévy
distributions, which have an analytic density and a heavy tail.

In the case of the Florentine algorithm (which corresponds to an
exponent~$\alpha=1/2$, as seen below), an expression of the Laplace transform
of the limit distribution already appears in \cite[]{louchard}, namely:
\[\frac1{e^{-z} + \sqrt{\pi z}\erf(\sqrt z)}\text.\]
We readily check that this expression is equivalent to $\phi_{1/2}(iz)$, the
Laplace transform of the Darling-Mandelbrot distribution of parameter $1/2$.

The paper is organized as follows. In Section~\ref{sec:TSP}, we define the
threshold sum process and show that, under some conditions, its limit
distribution is a Darling--Mandelbrot distribution. In
Section~\ref{sec:density}, we give an explicit form for the
Darling--Mandelbrot density and give analytic results expanding those of Lew.
Finally, in Section~\ref{sec:applications}, we use these results to analyse some
anticipated rejection algorithms. 

\section{The threshold sum process} 
\label{sec:TSP}
\label{sec.TSP}

In the following, let $(X_i)_{i\ge0}$ be a sequence of independent and
identically distributed random variables with values in $\naturals$ or
$\reals_+$ and unbounded support. We denote by $F(x)$ the complementary
cumulative distribution function of the $X_i$'s:
\[F(x) = \mathbb P(X_i\ge x)\text.\]
Let $t\ge0$ and let $I(t)$ be the smallest index such that $X_{I(t)}\ge t$.
Define the \emph{threshold sum process} (TSP) $Y_t$ as:
\begin{equation*} 
Y_t = X_0 + \dotsb + X_{I(t)-1}\text.
\end{equation*}
The number $t$ is called the \emph{threshold}. This process resembles the
classical sum of independent random variables, but the number of
summands~$I(t)$ is here a random variable depending on the real parameter~$t$.
Our main result on this process is the following.

\begin{theorem}
\label{thm:main}
Assume that, as $x$ tends to infinity, $F(x)$ is equivalent to $c\,x^{-\alpha}$
for some $c>0$ and $\alpha>0$. Then, as $t$ tends to infinity, the random
variable $Y_t$ satisfies:
\begin{itemize}
\item if $\alpha<1$, then $Y_t/t$ converges in distribution to the
Darling--Mandelbrot law of parameter~$\alpha$;
\item if $\alpha=1$, then $Y_t/(t\log t)$ converges in distribution to the
exponential law;
\item if $\alpha>1$, then $Y_t/(t^\alpha c^{-1}\mu)$, where $\mu = \mathbb
E(X_i)$, converges in distribution to the exponential law.
\end{itemize}
\end{theorem}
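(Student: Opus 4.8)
The plan is to obtain an exact formula for the characteristic function of $Y_t$ and then analyse its asymptotics in each of the three regimes. First I would condition on the value of $I(t)$: the event $\{I(t)=k\}$ has probability $(1-F(t))^k F(t)$, and conditionally on it the summands $X_0,\dots,X_{k-1}$ are i.i.d.\ with the law of $X$ given $X<t$ (while $X_k$, which does not contribute to $Y_t$, has the law of $X$ given $X\ge t$). Writing $\psi_t(s)=\mathbb{E}[e^{isX}\mathbf 1_{X<t}]$, so that $\mathbb{E}[e^{isX}\mid X<t]=\psi_t(s)/(1-F(t))$ and $\lvert\psi_t(s)\rvert\le 1-F(t)<1$, summing the resulting geometric series gives the master identity
\[
\mathbb{E}\bigl[e^{isY_t}\bigr]=\sum_{k\ge0}(1-F(t))^k F(t)\Bigl(\tfrac{\psi_t(s)}{1-F(t)}\Bigr)^{\!k}=\frac{F(t)}{1-\psi_t(s)}\text.
\]

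Second, given a scaling sequence $a_t$ and setting $\lambda=s/a_t$, I would split $1-\psi_t(\lambda)=F(t)+\mathbb{E}[\mathbf 1_{X<t}(1-e^{i\lambda X})]$, so that $\mathbb{E}[e^{isY_t/a_t}]=(1+R_t)^{-1}$ with $R_t=\mathbb{E}[\mathbf 1_{X<t}(1-e^{i\lambda X})]/F(t)$. A Stieltjes integration by parts against the distribution function $x\mapsto\mathbb{P}(X<x)=1-F(x)$ (the boundary terms and any atom at $0$ being killed by $1-e^{i\lambda\cdot0}=0$) turns the numerator into
\[
\mathbb{E}\bigl[\mathbf 1_{X<t}(1-e^{i\lambda X})\bigr]=-(1-e^{i\lambda t})\,F(t)-i\lambda\int_0^t e^{i\lambda x}F(x)\dd x\text,
\]
so that everything reduces to the asymptotics of $\int_0^t e^{i\lambda x}F(x)\dd x$, which is where the hypothesis $F(x)\sim c\,x^{-\alpha}$ enters. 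For $\alpha<1$ I take $a_t=t$, substitute $x=ty$ and use $F(ty)/F(t)\to y^{-\alpha}$ to get $R_t\to-(1-e^{is})-is\int_0^1 e^{isy}y^{-\alpha}\dd y$; expanding $e^{isy}$ termwise and integrating gives $is\int_0^1 e^{isy}y^{-\alpha}\dd y=\sum_{n\ge1}\tfrac{n}{n-\alpha}\tfrac{(is)^n}{n!}$, hence $R_t\to-\sum_{n\ge1}\tfrac{\alpha}{n-\alpha}\tfrac{(is)^n}{n!}$ and $\mathbb{E}[e^{isY_t/t}]\to\phi_\alpha(s)$, matching the second expression in \eqref{dm}. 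For $\alpha=1$ the mean of $X$ is infinite, so I take $a_t=t\log t$; then $\lambda t\to0$, whence $e^{i\lambda x}\to1$ uniformly on $[0,t]$ and $\int_0^t F(x)\dd x\sim c\log t$, which yields $R_t\to-is$. For $\alpha>1$ I take $a_t=t^\alpha c^{-1}\mu$; again $\lambda t\to0$, $e^{i\lambda x}\to1$ uniformly, and $\int_0^t F(x)\dd x\to\mu=\mathbb{E}(X)<\infty$, so again $R_t\to-is$. In both of the last two cases $\mathbb{E}[e^{isY_t/a_t}]\to(1-is)^{-1}$, the characteristic function of the exponential law of mean one. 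Lévy's continuity theorem then gives convergence in distribution in all three cases.

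The main obstacle is the interchange of limit and integral in the case $\alpha<1$: although $F(ty)/F(t)\to y^{-\alpha}$ pointwise on $(0,1]$ and $y^{-\alpha}$ is integrable near $0$, the ratio $F(ty)/F(t)$ can be large for small $y$, so a uniform bound is needed. This I would settle with a Potter-type estimate for the regularly varying tail $F$ (for every $\delta>0$, $F(ty)/F(t)\le 2\,y^{-\alpha-\delta}$ for $y\in(0,1]$ once $ty$ is large enough), combined with the trivial bound $\int_0^{\e/t}\lvert e^{isy}F(ty)\rvert\dd y\le\e/t\to0$ for the remaining boundary layer near $0$; choosing $\delta<1-\alpha$ makes the dominating function $2\,y^{-\alpha-\delta}$ integrable on $(0,1]$, and dominated convergence applies. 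The analogous (and easier) estimates for $\alpha\ge1$ use only $\lvert e^{i\lambda x}-1\rvert\le\lvert\lambda\rvert x\le\lvert\lambda\rvert t$ together with, respectively, the logarithmic growth and the finiteness of $\int_0^t F(x)\dd x$, and a routine check that the $-(1-e^{i\lambda t})F(t)$ term is negligible after dividing by $F(t)$.
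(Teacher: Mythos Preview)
Your proof is correct and follows essentially the same route as the paper: compute the characteristic function of $Y_t$ from the geometric structure of $I(t)$, then pass to the limit and invoke Lévy's continuity theorem. The only organizational difference is that the paper Taylor-expands the truncated characteristic function into conditional moments $M_{t,n}=\int_0^t x^n\,dF$ and analyses each moment separately, whereas you perform one Stieltjes integration by parts and analyse the single integral $\int_0^t e^{i\lambda x}F(x)\,dx$; the two computations are equivalent once one expands $e^{i\lambda x}$ termwise and swaps sum and integral.

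Two minor remarks on the domination step for $\alpha<1$. First, the Potter bound is more than you need: the hypothesis $F(x)\sim c\,x^{-\alpha}$ (not merely regular variation) together with $F\le1$ gives directly $F(x)\le c'\,x^{-\alpha}$ for all $x>0$, hence $F(ty)/F(t)\le C\,y^{-\alpha}$ for all $y\in(0,1]$ and $t$ large, which is already integrable on $(0,1]$ since $\alpha<1$; this is exactly the bound the paper uses, and it makes your boundary-layer argument superfluous. Second, in that boundary-layer estimate you dropped the factor $1/F(t)$: the quantity to control is $\lvert s\rvert\int_0^{\epsilon/t}\bigl(F(ty)/F(t)\bigr)\,dy\le\lvert s\rvert\,\epsilon/\bigl(tF(t)\bigr)\sim \lvert s\rvert\,\epsilon\,t^{\alpha-1}/c$, which still tends to $0$ since $\alpha<1$, so the argument survives after this correction.
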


To us, the most interesting case is $\alpha < 1$, where the behavior of~$Y_t$
is strongly universal in that it only depends on the exponent~$\alpha$.
Moreover, the scaling factor is always~$t$ in that range (this is different
from Lévy's theory of sums of i.i.d.\ random variables, where the scaling
factor is a power of~$t$ depending on~$\alpha$). For $\alpha=1$, the
scaling factor is augmented by a~$\log t$ factor; for $\alpha>1$, the scaling
factor is higher and we have a lesser form of universality, with the limit
scaled by~$\mu/c$. Consequences of these facts to the analysis of algorithms
are discussed in Section~\ref{sec:applications}.



\begin{proof}
We prove this result using Lévy's Continuity Theorem, which states that a
sequence of random variables tends in distribution to some limit if their
characteristic functions converge pointwise to the characteristic function of
the limit distribution.

Let $\psi_t(s) = \mathbb E(e^{isY_t/\tau})$ be the characteristic function of
the random variable $Y_t/\tau$, where $\tau$ is a scaling factor (depending
on~$t$) to be specified later on. The index $I(t)$ is geometrically
distributed with parameter~$F(t)$, which is the probability that $X_i\ge t$.
The random variables $X_0,\dotsc,X_{I(t)-1}$ are constrained to be less
than~$t$; let $\chi_t(s) = \mathbb E(e^{isX/t}|X < t)$ be the characteristic
function of such a constrained variable. We have:
\[\psi_t(s) = \frac{F(t)}{1 - \bigl(1-F(t)\bigr)\chi_t(s/\tau)}\text.\]
On the other hand, we have:
\[\chi_t(s) = \frac1{1 - F(t)}\sum_{n=0}^\infty
M_{t,n}\frac{(is)^n}{n!}\text,\qquad\text{with}\qquad M_{t,n} = \int_0^tx^ndF(x)\text.\]
We therefore have:
\[\psi_t(s) = \frac{F(t)}
{\displaystyle1 - \sum_{n=0}^\infty\frac{M_{t,n}}{\tau^n}\frac{(is)^n}{n!}}
= \frac1{\displaystyle1 - \sum_{n=1}^\infty \frac{M_{t,n}}{\tau^n F(t)}
\frac{(is)^n}{n!}}\text,\]
where the last simplification follows from the fact that $M_{t,0} = 1 - F(t)$.
%

%
%
Consider first the case where $\alpha < 1$. Using integration by parts, we
find that the term $M_{t,n}$ satisfies as $t$ tends to infinity:
\[M_{t,n} = -t^nF(t) + \int_0^tnx^{n-1}F(x)dx
\sim\frac\alpha{n-\alpha}\,c\,t^{n-\alpha}.\]
Moreover, as $F$ is nonincreasing, we have a bound $F(x)\le c'x^{-\alpha}$ for
some constant~$c'$. This enables us to dominate~$M_{t,n}$ by:
\[M_{t,n}\le\frac{n}{n-\alpha}c't^{n-\alpha}.\]
Picking $\tau=t$, a dominated convergence argument and the expression
\eqref{dm} therefore show that the characteristic function $\psi_t(s)$
tends to the characteristic function~$\phi_\alpha(s)$ of the
Darling--Mandelbrot distribution. We conclude using Lévy's theorem.

If $\alpha = 1$, we have $M_{t,1}\sim c\log t$ as $t$ tends to infinity; if
$\alpha > 1$, $M_{t,1}$ tends to the finite value~$\mu$. This means that the
ratio~$M_{t,1}/[\tau F(t)]$ tends to~$1$ with the respective values $\tau =
t\log t$ and $\tau = t^\alpha\mu/c$. Moreover, in both cases, all the higher
moments satisfy $M_{t,n} = \bigo(t^{n-1})$ and are therefore negligible before
$\tau^nF(t)$. This means that $\psi_t(s)$ satisfies:
\[\psi_t(s)\to \frac1{1-is}\text,\]
which is the characteristic function of the exponential distribution.
\end{proof}

\section{The Darling--Mandelbrot density}
\label{sec.DMfacts}
\label{sec:density}

This section is devoted to the computation and the derivation of properties of
the density, denoted by~$g$, of the Darling--Mandelbrot distribution. By
studying the Laplace transform, Lew \cite[]{lew} determined that $g$ is a
continuous function satisfying:
\begin{alignat}{2} \label{gzero}
g(x) &= C_0x^{\alpha-1} + \bigo(1)\text,&\qquad x&\to0^+\text;\\ \label{ginf}
g(x) &= \frac{a_0}{\alpha}e^{-a_0(1+x)}+\bigo(e^{-a_1x})\text,&\qquad
x&\to\infty\text,
\end{alignat}
where $C_0$ is the constant:
\begin{equation} \label{C0}
C_0 = \frac{\sin(\alpha\pi)}{\pi} = \frac1{\Gamma(1-\alpha)\Gamma(\alpha)}
\end{equation}
and where $-a_0$ is the real zero of the function~$z\mapsto
z^\alpha\gamma(-\alpha,z)$ and $a_1 > a_0$ (see the reference for details).


\subsection{Explicit forms of the density}

\begin{theorem} \label{thm:density}
Let $0<\alpha<1$. The Darling--Mandelbrot density $g(x)$ is equal to:
\begin{equation} \label{g}
g(x) = \sum_{k=0}^\infty g_k(x)\text,
\end{equation}
where the function~$g_k(x)$ is continuous for $x > 0$, supported for~$x>k$,
analytic on its support, and has the two following equivalent definitions.
\begin{itemize}
\item Let $a(x)$ and $b(x)$ be the functions, supported for~$x>0$ and~$x>1$
respectively, defined by:
\begin{equation} \label{ab}
a(x) = C_0 x^{\alpha-1}
\qquad\text{and}\qquad
b(x) = -C_0 \frac{(x-1)^\alpha}{x}\text,
\end{equation}
where $C_0$ is defined by \eqref{C0}. Then $g_k(x)$ is equal to the
convolution product:
\begin{equation} 
\label{gk-convol}
g_k(x) = a*\underbrace{b*\dotsm*b}_{\text{$k$ times}}\,(x)\text.
\end{equation}
\item Let:
\begin{equation}
\beta_k = \alpha + k(1 + \alpha)\qquad\text{and}\qquad
C_k =\frac1{\Gamma(1-\alpha)\Gamma(-\alpha)^k\Gamma(\beta_k)}\text.
\end{equation}
Then $g_k(x)$ has the power series representation\footnote{The sum in this
expression can be seen as a special case of the Lauricella function
$F_B^{(k)}$ where all variables are specialized to~$-x$.} convergent for
$k<x<k+1$ and analytically continuable for $x\ge k+1$:
\begin{equation} \label{gk}
g_k(x) = C_k(x-k)^{\beta_k-1}\sum_{n_1,\dotsc,n_k\ge0}
\frac{(1{+}\alpha)_{n_1}\dotsm(1{+}\alpha)_{n_k}}{(\beta_k)_{n_1+\dotsb+n_k}}
(k-x)^{n_1+\dotsb+n_k}\text,
\end{equation}
where $(y)_n = \Gamma(y+n)/\Gamma(y)$ is the Pochhammer symbol for the rising
factorial.
\end{itemize}
\end{theorem}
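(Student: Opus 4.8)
The plan is to read off from the explicit characteristic function~\eqref{dm} a geometric-series decomposition of the Laplace transform of~$g$, and then to invert it term by term, using that convolution becomes multiplication under the Laplace transform. Write $\mathcal L h(p)=\int_0^\infty e^{-px}h(x)\,\dd x$; by~\eqref{dm} we have $\mathcal L g(p)=\phi_\alpha(ip)=p^{-\alpha}/(-\alpha\,\gamma(-\alpha,p))$. The two ingredients are computed directly: $\mathcal L a(p)=C_0\,\Gamma(\alpha)\,p^{-\alpha}=p^{-\alpha}/\Gamma(1-\alpha)$, from $\mathcal L[x^{\alpha-1}](p)=\Gamma(\alpha)p^{-\alpha}$ and~\eqref{C0}; and, after the substitution $x=1+u$, $\mathcal L b(p)=-C_0 e^{-p}\int_0^\infty\frac{u^\alpha e^{-pu}}{1+u}\,\dd u=-C_0\,\Gamma(1+\alpha)\,\Gamma(-\alpha,p)=-\frac{\alpha}{\Gamma(1-\alpha)}\,\Gamma(-\alpha,p)$, where the middle step is the standard incomplete-gamma integral $\int_0^\infty\frac{u^{\mu-1}e^{-pu}}{1+u}\,\dd u=e^{p}\Gamma(\mu)\Gamma(1-\mu,p)$. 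The algebraic identity
\[ \frac{\mathcal L a(p)}{1-\mathcal L b(p)}=\frac{p^{-\alpha}}{\Gamma(1-\alpha)+\alpha\,\Gamma(-\alpha,p)}=\frac{p^{-\alpha}}{-\alpha\,\gamma(-\alpha,p)}=\mathcal L g(p) \]
then follows from $\gamma(-\alpha,p)=\Gamma(-\alpha)-\Gamma(-\alpha,p)$ and $\Gamma(1-\alpha)=-\alpha\,\Gamma(-\alpha)$, both sides being analytic wherever defined.

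To make the inversion rigorous, observe that $\mathcal L|b|(p)=-\mathcal L b(p)=\frac{\alpha}{\Gamma(1-\alpha)}\Gamma(-\alpha,p)$ decreases continuously from $+\infty$ (as $p\to 0^+$) to $0$ (as $p\to\infty$), so there is a unique $p_0>0$ with $\mathcal L|b|(p_0)=1$, and for $p>p_0$ the geometric series $\sum_{k\ge0}\mathcal L b(p)^k$ converges to $1/(1-\mathcal L b(p))$. For such $p$ I would justify $\mathcal L\bigl(\sum_{k\ge0}a*b^{*k}\bigr)(p)=\sum_{k\ge0}\mathcal L a(p)\,\mathcal L b(p)^k$: the sum $\sum_k g_k$ with $g_k=a*b^{*k}$ is \emph{locally finite}, since $g_k$ is supported on $x>k$, so only finitely many terms contribute at any point~$x$; and $|a*b^{*k}|\le a*|b|^{*k}$ furnishes a dominating series whose Laplace transform $\mathcal L a(p)/(1-\mathcal L|b|(p))$ is finite, so dominated convergence applies. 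Thus $\sum_k g_k$ and $g$ have equal Laplace transforms on $(p_0,\infty)$; since both are continuous on~$(0,\infty)$ — for $\sum_k g_k$ this uses $g_k(x)\sim C_k(x-k)^{\beta_k-1}$ near $x=k^+$ with $\beta_k-1>0$ for $k\ge1$ — uniqueness of the Laplace transform gives $g=\sum_k g_k$, establishing~\eqref{g} with the convolution form~\eqref{gk-convol}. The asserted support, continuity on~$(0,\infty)$ and analyticity on~$x>k$ of~$g_k$ are then read off this convolution representation.

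It remains to expand the $k$-fold convolution. Writing $(a*b^{*k})(x)$ as an integral over the simplex $\{u_0,\dots,u_k\ge0,\ u_0+\dots+u_k=x\}$ and substituting $u_i=1+v_i$ for $1\le i\le k$ gives $g_k(x)=C_0^{k+1}(-1)^k\int v_0^{\alpha-1}\prod_{i=1}^k\frac{v_i^\alpha}{1+v_i}\,\dd\sigma$ over $\{v_i\ge0,\ v_0+\dots+v_k=x-k\}$. For $k<x<k+1$ one has $v_0+\dots+v_k=x-k<1$, hence every $v_i<1$, and inserting $\frac1{1+v_i}=\sum_{n_i\ge0}(-v_i)^{n_i}$ and integrating term by term with the Dirichlet--Liouville integral $\int_{\{v_i\ge0,\ \sum v_i=y\}}\prod_i v_i^{a_i-1}\,\dd\sigma=y^{(\sum_i a_i)-1}\prod_i\Gamma(a_i)/\Gamma(\sum_i a_i)$, taken with $a_0=\alpha$ and $a_i=\alpha+n_i+1$ (so $\sum_i a_i=\beta_k+n_1+\dots+n_k$), yields
\[ g_k(x)=\frac{C_0^{k+1}(-\alpha)^k\Gamma(\alpha)^{k+1}}{\Gamma(\beta_k)}(x-k)^{\beta_k-1}\sum_{n_1,\dots,n_k\ge0}\frac{(1{+}\alpha)_{n_1}\dotsm(1{+}\alpha)_{n_k}}{(\beta_k)_{n_1+\dots+n_k}}(k-x)^{n_1+\dots+n_k}, \]
after using $\Gamma(\alpha+n_i+1)=\alpha\,\Gamma(\alpha)(1{+}\alpha)_{n_i}$ and $\Gamma(\beta_k+m)=\Gamma(\beta_k)(\beta_k)_m$. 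The prefactor simplifies to~$C_k$ via $\Gamma(1-\alpha)=-\alpha\,\Gamma(-\alpha)$ and $C_0\Gamma(\alpha)=1/\Gamma(1-\alpha)$, giving~\eqref{gk}; convergence of the multi-series for $x-k<1$ is automatic, and the sum is precisely the announced specialization of~$F_B^{(k)}$. Finally, since~\eqref{gk-convol} exhibits $g_k$ as analytic on all of $x>k$, the analytic continuation of~\eqref{gk} past $x=k+1$ must coincide with it.

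The main obstacle is the second step, not the longer but essentially routine simplex computation: one has to produce the closed form of~$\mathcal L b$ from a non-elementary integral, notice that the geometric series representing~$\mathcal L g$ converges only on the half-line $p>p_0$ (and not up to the abscissa of convergence of~$\mathcal L g$), and control the interchange of $\sum_k$ with the Laplace integral before invoking uniqueness of Laplace transforms. Everything downstream is the identification of Gamma-function constants.
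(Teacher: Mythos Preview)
Your proof is correct and follows the same route as the paper: expand the Laplace transform $G(z)=p^{-\alpha}/(-\alpha\gamma(-\alpha,p))$ as a geometric series $\sum_k A(p)B(p)^k$, identify $A,B$ as the Laplace transforms of $a,b$, invert to get the convolution form, and then expand $b$ near $x=1$ and convolve power functions to obtain the series~\eqref{gk}. Your treatment is in fact more careful than the paper's on two points---you explicitly locate the half-line $p>p_0$ on which the geometric series converges and justify the termwise inversion by dominated convergence---and your use of the Dirichlet--Liouville simplex integral is just an unpacking of the paper's convolution formula~\eqref{convol}.
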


\noindent
Again, we make some remarks before proving the theorem. First, since the
summand~$g_k(x)$ has support for~$x>k$, the infinite sum in \eqref{g} is
locally finite, which justifies its existence and shows that $g$ is
continuous. Moreover, since $b(x)$ is negative, the summands alternate in
sign. In particular, if $x\le1$, we have $g(x) = g_0(x) = C_0x^{\alpha-1}$.
This shows that the error term in \eqref{gzero} is in fact zero in that range.
In the case $k = 1$, the sum in \eqref{gk} takes the form of a hypergeometric
function:
\[g_1(x) = C_1(x-1)^{2\alpha}\,{}_2F_1(1,1+\alpha;1+2\alpha; 1-x)\text.\]
For $\alpha=1/2$, this simplifies into:
\[g_1(x) = \frac{x^{-1/2}-1}{\pi}\text.\]

The theorem also enables us to find the singularities of the density~$g(x)$.
Since the leading term in the sum in \eqref{gk} is~$1$, the function~$g_k(x)$
has a singularity at~$k$ of the form:
\[g_k(x) = C_k(x-k)^{\beta_k-1}\,\mathbf 1_{x>k}
+ \bigo\bigl((x-k)^{\beta_k}\bigr) \text.\]
Moreover, as the function~$g_k(x)$ is analytic for $x>k$ and the sum \eqref{g}
is locally finite, the density~$g(x)$ is singular at all integer points and
analytic otherwise, the singularity at the point $x = k$ being contributed
by~$g_k(x)$.

Finally, we note that although the sum in \eqref{g} behaves very well locally
(indeed, it's locally finite), it's not the case globally: as $x$ tends to
infinity, $g_k(x)$ behaves like $x^{k\alpha-1}$ and so alternately tends to
$\pm\infty$ for $k$ sufficiently large. Yet, as found by studying the Laplace
transform, the sum converges exponentially fast to zero.

In order to plot the density~$g(x)$, the most adequate characterisation is
\eqref{gk-convol}, or better yet, the differential equation of the forthcoming
Theorem~\ref{thm.equadiff}, which was used to produce Figure~\ref{fig:g}.

\begin{figure}[htb]
\setlength{\unitlength}{16pt}
\begin{picture}(21,5.7)(.2,0)
\put(0.1,.5){\includegraphics[scale=0.8]{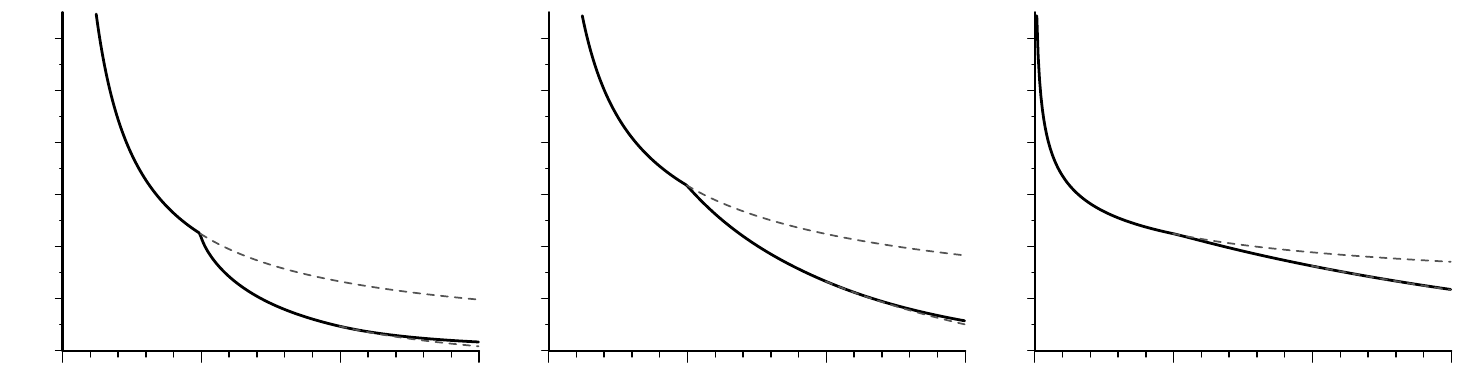}}
\put(1,0){\makebox[0pt][c]{0}}
\put(3,0){\makebox[0pt][c]{1}}
\put(5,0){\makebox[0pt][c]{2}}
\put(7,0){\makebox[0pt][c]{3}}
\put( 8,0){\makebox[0pt][c]{0}}
\put(10,0){\makebox[0pt][c]{1}}
\put(12,0){\makebox[0pt][c]{2}}
\put(14,0){\makebox[0pt][c]{3}}
\put(15,0){\makebox[0pt][c]{0}}
\put(17,0){\makebox[0pt][c]{1}}
\put(19,0){\makebox[0pt][c]{2}}
\put(21,0){\makebox[0pt][c]{3}}
\put(0.38,.55){\makebox[0pt][l]{0}}
\put(0.4,1.3){\makebox[0pt][l]{.1}}
\put(0.4,2.05){\makebox[0pt][l]{.2}}
\put(0.4,2.8){\makebox[0pt][l]{.3}}
\put(0.4,3.55){\makebox[0pt][l]{.4}}
\put(0.4,4.3){\makebox[0pt][l]{.5}}
\put(0.4,5.05){\makebox[0pt][l]{.6}}
\put(7.38,.55){\makebox[0pt][l]{0}}
\put(7.4,1.3){\makebox[0pt][l]{.1}}
\put(7.4,2.05){\makebox[0pt][l]{.2}}
\put(7.4,2.8){\makebox[0pt][l]{.3}}
\put(7.4,3.55){\makebox[0pt][l]{.4}}
\put(7.4,4.3){\makebox[0pt][l]{.5}}
\put(7.4,5.05){\makebox[0pt][l]{.6}}
\put(14.38,.55){\makebox[0pt][l]{0}}
\put(14.4,1.3){\makebox[0pt][l]{.1}}
\put(14.4,2.05){\makebox[0pt][l]{.2}}
\put(14.4,2.8){\makebox[0pt][l]{.3}}
\put(14.4,3.55){\makebox[0pt][l]{.4}}
\put(14.4,4.3){\makebox[0pt][l]{.5}}
\put(14.4,5.05){\makebox[0pt][l]{.6}}
\end{picture}
\caption{\label{fig:g}%
Plots of the density~$g(x)$ for $\alpha=1/4$, $\alpha=1/2$ and
$\alpha=3/4$ (from left to right). Dashed, the continuation of
the partial sums $g_0+\cdots+g_k$ beyond $x=k+1$.
The precision is far beyond line thickness (as easily obtained through
the characterisation of Theorem~\ref{thm.equadiff}).
} 
\end{figure}

\begin{proof}[of Theorem~\ref{thm:density}]
Let us first prove the convolution product representation. From the identity
\eqref{dm}
we find the Laplace transform of~$g(x)$, that we denote
by~$G(z)$:
\[G(z) = \phi_\alpha(iz) = \frac{z^{-\alpha}}{-\alpha\gamma(-\alpha,z)}\text.\]
We transform this into:
\[G(z) =
\frac{z^{-\alpha}}{-\alpha\bigl(\Gamma(-\alpha)-\Gamma(-\alpha,z)\bigr)}
= \sum_{k\ge0}A(z)B(z)^k\text,\]
where $\Gamma(\cdot,\cdot)$ is the upper incomplete gamma function and:
\[
A(z) = \frac{z^{-\alpha}}{\Gamma(1-\alpha)}
\qquad\text{and}\qquad
B(z) = \frac{\Gamma(-\alpha,z)}{\Gamma(-\alpha)}
\]
(if $z$ is large enough so that $\abs{B(z)} < 1$; 
numerically, $\reof(z)>0.107878\ldots$\ suffices, uniformly for all $\alpha$).

Noting that $\Gamma(1-\alpha)\Gamma(\alpha) =
-\Gamma(-\alpha)\Gamma(1+\alpha)$, the following elementary computations show
that the functions $a(x)$ and $b(x)$ defined in \eqref{ab} have Laplace
transforms $A(z)$ and $B(z)$, respectively:
\begin{align*}
\int_0^\infty x^{\alpha-1} e^{-xz} dx
&= z^{-\alpha}\int_0^\infty x^{\alpha-1} e^{-x} dx\\
&= z^{-\alpha}\Gamma(\alpha)\text;\\
\int_1^\infty \frac{(x-1)^\alpha}{x} e^{-xz} dx
&= \int_z^\infty\int_1^\infty (x-1)^\alpha e^{-xy} dx dy\\
&= \int_z^\infty y^{-1-\alpha}\Gamma(1+\alpha)e^{-y}dy\\
&= \Gamma(-\alpha,z)\Gamma(1+\alpha)\text.
\end{align*}
Inverse Laplace transform thus yields \eqref{gk-convol}. The function $g_k(x)$
is analytic for $x>k$ as the convolution product of analytic functions.

Let us now prove the power series representation. Let $1<x<2$. A Taylor
expansion of the function $b(x)$ yields:
\[b(x) =
\frac1{\Gamma(-\alpha)}\sum_{n\ge0}(-1)^n\frac{(x-1)^{\alpha+n}}
{\Gamma(1+\alpha)}.\]
The identity \eqref{gk} then follows from \eqref{gk-convol} using the classic
formula:
\begin{equation} \label{convol}
f_{r_1}^{(\alpha_1)}*\dotsm*f_{r_k}^{(\alpha_k)} =
f_{r_1+\dotsb+r_k}^{(\alpha_1+\dotsb+\alpha_k)}\text,\qquad
f_{r}^{(\alpha)}(x) = \frac{(x-r)^{\alpha-1}}{\Gamma(\alpha)}\mathbf
1_{x>r}\text.
\end{equation}

Finally, since $g_k(x)$ is analytic for $x > k$, its value for $x\ge k+1$ is
found by analytic continuation.
\end{proof}


\subsection{Differential equations satisfied by the density}

In this section, we characterize the density~$g$ not explicitly, but
implicitly as the solution of differential equations. Since $g$ is singular at
all integer points, all differential equations are understood to be satisfied
only outside singular points.

\begin{theorem}
\label{thm.equadiff}
The density $g(x)$ is the only continuous solution of the non-linear
differential equation:
\begin{equation} 
\label{de}
xg'(x) + (1 - \alpha) g(x) = -\alpha\,g*g(x-1)\text,
\end{equation}
with initial condition \eqref{gzero}.
\end{theorem}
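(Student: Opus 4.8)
The plan is to move to the Laplace side, where the nonlinear equation \eqref{de} turns into a first-order identity that the Darling--Mandelbrot transform satisfies by inspection, and then to recover uniqueness by solving $g$ interval by interval on the intervals $(k,k+1)$.

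For the existence half I would let $G(z)=\phi_\alpha(iz)=z^{-\alpha}/\bigl(-\alpha\gamma(-\alpha,z)\bigr)$ be the Laplace transform of $g$, which converges for $\reof(z)>0$ since $g$ is a density with the exponential decay \eqref{ginf}. The one subtle point is that the $x^{\alpha-1}$ singularity of $g$ at the origin makes $g'$ non-integrable there, so I would not transform $g'$ directly but rewrite $xg'(x)=(xg(x))'-g(x)$; since $xg(x)\sim C_0x^{\alpha}\to0$ at the origin and decays exponentially at infinity by \eqref{ginf}, both boundary terms drop and $\mathcal L[xg'](z)=-zG'(z)-G(z)$. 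With $\mathcal L\bigl[(g*g)(\cdot-1)\bigr](z)=e^{-z}G(z)^2$, equation \eqref{de} is, by injectivity of the Laplace transform, equivalent to
\[ zG'(z)+\alpha G(z)=\alpha\,e^{-z}G(z)^2 , \]
which I would verify by logarithmic differentiation: writing $u(z)=-\alpha\gamma(-\alpha,z)$, so that $u'(z)=-\alpha z^{-\alpha-1}e^{-z}$, one gets $zG'(z)/G(z)=-\alpha-zu'(z)/u(z)=-\alpha+\alpha e^{-z}z^{-\alpha}/u(z)=-\alpha+\alpha e^{-z}G(z)$, which rearranges to the displayed identity. Since both sides of \eqref{de} are continuous away from the integers and have equal Laplace transforms, they agree there, and $g$ satisfies \eqref{gzero} by Lew's result recalled above.

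For uniqueness, let $h$ be any continuous function on $\reals_+$ (extended by zero to $\reals_-$) solving \eqref{de} off the integers and satisfying \eqref{gzero}. On $(0,1)$ the right side $(h*h)(x-1)$ vanishes because $h$ is supported on $\reals_+$, so $xh'+(1-\alpha)h=0$, forcing $h(x)=Cx^{\alpha-1}$, and \eqref{gzero} gives $C=C_0$; hence $h=g$ on $(0,1]$. Inductively, assume $h=g$ on $(0,k]$; for $x\in(k,k+1)$ the value $(h*h)(x-1)=\int_0^{x-1}h(t)h(x-1-t)\dd t$ only involves $h$ on $(0,k)$ and therefore equals the known continuous function $\varphi(x):=-\alpha(g*g)(x-1)$. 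Thus on $(k,k+1)$ the function $h$ solves the \emph{linear} equation $xh'+(1-\alpha)h=\varphi$, whose integrating factor $x^{-\alpha}$ yields $x^{1-\alpha}h(x)=c+\int_k^x t^{-\alpha}\varphi(t)\dd t$; the constant $c$ is pinned down by continuity of $h$ at $x=k$, where the finite value $h(k)=g(k)$ is available because $g$ is continuous at every positive integer by Theorem~\ref{thm:density}. Since $g$ solves the same linear equation with the same value at $k$, we conclude $h=g$ on $[k,k+1]$, and the induction closes to give $h=g$ on $\reals_+$.

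I expect the only genuine work to be technical: justifying the Laplace-transform manipulation despite the $x^{\alpha-1}$ blow-up at the origin, which is exactly what the substitution $xg'=(xg)'-g$ (together with $xg(x)\to0$) takes care of, and verifying that the hypothesis of being a \emph{continuous} solution supplies precisely the matching data — the value of $g$ at each positive integer — needed to fix the free constant at every step of the recursive linear solve. The analytic inputs \eqref{gzero} and \eqref{ginf}, and the continuity statement of Theorem~\ref{thm:density}, are what make both halves run smoothly.
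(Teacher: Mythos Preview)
Your argument is correct, and the uniqueness half matches the paper's exactly. For existence, however, the paper takes a different route: rather than verifying the Laplace identity $zG'+\alpha G=\alpha e^{-z}G^2$ directly, it derives \eqref{de} probabilistically by comparing the threshold sum processes at two nearby thresholds $t$ and $u=\lambda t$, obtaining a scaling relation for the limiting density and extracting \eqref{de} at first order in $\lambda-1$. The paper does flag your Laplace-transform route as a valid alternative (``One way to proceed is to differentiate the Laplace transform $G(z)$''), so you have simply carried out the option it left to the reader. Your approach is more self-contained and computational, and your care with the $x^{\alpha-1}$ singularity via $xg'=(xg)'-g$ is exactly what is needed; the paper's approach, by contrast, explains \emph{why} the equation holds---it is the infinitesimal form of the self-similarity of the threshold sum process---but is more heuristic as presented, since the $\mathcal O(\lambda-1)$ remainder and the passage to densities are not fully justified there.
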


As the density~$g$ is positive, this result shows in particular that~$g$ is
decreasing. In fact, the equation above can be rewritten as
\begin{equation*}
\label{de_mod}
\frac{\dd}{\dd x}\big( x^{1-\a} g(x) \big)
=-\a x^\a
g*g(x-1)\text.
\end{equation*}
This makes evident the stronger statement that $x^{1-\alpha}g(x)$
is nonincreasing. This answers a question of Lew, who suggested that~$g(x)$
might show oscillations for small values of~$\alpha$.

\begin{proof}
Let us prove that $g$ satisfies the equation. One way to proceed is to
differentiate the Laplace transform $G(z)$. One can also directly use the
representations of Theorem~\ref{thm:density}.
Another way, that we detail here, is to compare the threshold sum processes at
thresholds $t$ and $u$, with $u\ge t$. We have:
\[Y_u = \begin{cases}
Y_t&\text{if $X_{I(t)} \ge u$;}\\
Y_t + X_{I(t)} + Y'_u&\text{if $t\le X_{I(t)} < u$,}
\end{cases}\]
where $Y'_u$ is independent from $Y_t$ and distributed like $Y_u$.

Now, set $u = \lambda t$ and let $t$ tend to infinity. The event $X_{I(t)} \ge
u$ occurs with probability $F(u)/F(t)\to \lambda^{-\alpha}$. If it does not,
we have $X_{I(t)} = t + \bigo(\lambda-1)$. Dividing by~$t$ and
recalling that $Y_t/t$ tends to the law of density~$g$, we get:
\[\lambda^{-1}g(\lambda^{-1}x) = \lambda^{-\alpha}g(x) +
(1 - \lambda^{-\alpha})\bigl(g*g(x-1) + \bigo(\lambda-1)\bigr)\text.\]
We recover \eqref{de} at first order in~$\lambda-1$.

To show the uniqueness of the solution, we note that the right hand side of
\eqref{de} depends only on the values of $g(y)$ for $y < x-1$; in particular,
it is zero for $x < 1$. This enables us to solve iteratively the equation on
the intervals~$[k,k+1]$, treating the equation as an inhomogenous linear
differential equation, with the initial value $f(k)$ found by continuity. This
determines the solution uniquely.
\end{proof}



Our final result writes the density~$g$ as the solution of linear differential
equations. Write $d_x = \dd/\dd x$ and let $D_k$ and $E_k$ be the
differential operators:
\begin{equation*}
D_k = d_x(x-k) - (k+1)\alpha\text;\qquad\qquad
E_k = D_{k-1}\dotsm D_0\text.
\end{equation*}

\begin{theorem} \label{thm:hol}
The operator $E_k$ cancels the functions~$g_0,\dotsc,g_{k-1}$ defined in
Theorem~\ref{thm:density}. In particular, it cancels the density~$g$ on the
interval~$[0,k]$.
\end{theorem}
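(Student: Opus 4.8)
The plan is to pass everything to the Laplace transform, where the operators $D_i$ become first-order differential operators in $z$ and the statement collapses to two one-line derivation identities. Recall from the proof of Theorem~\ref{thm:density} that $g_j$ has Laplace transform $A(z)\,B(z)^{j}$, with $A(z)=z^{-\alpha}/\Gamma(1-\alpha)$ and $B(z)=\Gamma(-\alpha,z)/\Gamma(-\alpha)$. Each $g_j$ with $j\ge 1$ vanishes on $(0,j)$, and $x\,g_0(x)=C_0\,x^{\alpha}\to 0$ as $x\to 0^+$; since $D_i$ is a local operator that does not enlarge the support and the $g_j$ are continuous with locally integrable derivatives, the same holds for every function obtained by applying the $D_i$ to $g_j$, so no boundary term at $x=0$ survives the integrations by parts below. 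Then $d_x$ becomes multiplication by $z$ and multiplication by $x$ becomes $-\partial_z$, so that $D_i=d_x(x-i)-(i+1)\alpha$ corresponds to $\widehat D_i:=-z\partial_z-i(z+\alpha)-\alpha$.

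First I would strip off the factor $A$: using $-z\,A'(z)=\alpha\,A(z)$ one checks in one line that $\widehat D_i(A\,F)=A\,\widehat\Delta_i F$ for any $F$, where $\widehat\Delta_i:=-z\partial_z-i(z+\alpha)$. Iterating, the Laplace transform of $E_k g_j$ equals $A(z)\,\widehat\Delta_{k-1}\cdots\widehat\Delta_0\,B(z)^{j}$, so it suffices to prove
\[
\widehat\Delta_j\widehat\Delta_{j-1}\cdots\widehat\Delta_0\,B^{j}=0\qquad\text{for every }j\ge 0;
\]
indeed, for $j<k$ the remaining operators $\widehat\Delta_{k-1},\dots,\widehat\Delta_{j+1}$ are then applied to $0$, and the case $j=0$ (which is $E_1 g_0=D_0 g_0=0$) reads $\widehat\Delta_0 B^{0}=0$.

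The computation rests on the two identities
\[
-z\partial_z B=\frac{\omega}{\Gamma(-\alpha)},\qquad -z\partial_z\,\omega=(z+\alpha)\,\omega,\qquad \omega(z):=z^{-\alpha}e^{-z},
\]
the first being the $z$-derivative of the incomplete gamma function and the second immediate. Since $-z\partial_z$ is a derivation, these give, for all $p,q,i$,
\[
\widehat\Delta_i\bigl(B^{p}\omega^{q}\bigr)=\frac{p}{\Gamma(-\alpha)}\,B^{p-1}\omega^{q+1}+(q-i)\,(z+\alpha)\,B^{p}\omega^{q}.
\]
Applying this at the $i$-th step always with $q$ equal to the current index $i$ --- which kills the second term --- an easy induction yields $\widehat\Delta_{i-1}\cdots\widehat\Delta_0\,B^{j}=\dfrac{j(j-1)\cdots(j-i+1)}{\Gamma(-\alpha)^{i}}\,B^{\,j-i}\omega^{i}$ for $i\le j$; in particular $\widehat\Delta_{j-1}\cdots\widehat\Delta_0\,B^{j}=\dfrac{j!}{\Gamma(-\alpha)^{j}}\,\omega^{j}$, which $\widehat\Delta_j$ annihilates because there both $p=0$ and $q-i=j-j=0$. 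Inverting the Laplace transform gives $E_k g_j=0$ for $0\le j\le k-1$. Since $g=\sum_{m\ge 0}g_m$ with $g_m$ supported on $(m,\infty)$, on $(0,k)$ only the terms $g_0,\dots,g_{k-1}$ contribute, so $g=\sum_{j=0}^{k-1}g_j$ there and hence $E_k g=0$ on $[0,k]$, away from the integer singular points $1,\dots,k-1$.

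The only point needing care --- and the one I expect to be the real obstacle --- is the passage between a function and its Laplace transform: one must know that $g_j$ and its iterated $D_i$-images are locally integrable with at most polynomial growth at infinity, so that the transforms exist on a common right half-plane and inversion is legitimate. This holds since they are finite convolutions of such functions (recall $g_k(x)=\bigo(x^{k\alpha-1})$ as $x\to\infty$), the only singularities being integrable power laws at integer points, and the multiplications by $(x-i)$ built into $D_i$ keep the relevant intermediate functions absolutely continuous on compacts with value $0$ at $x=0$, so differentiation is harmless; the $x=0$ boundary terms were arranged to vanish in the first paragraph. Everything else is the bookkeeping of the two derivation identities. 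Alternatively one can avoid transforms: on the $x$-side one has $D_i(a*h)=a*\Delta_i h$ with $\Delta_i=d_x(x-i)-i\alpha$, and expanding $b$ in the basis $f_r^{(\gamma)}$ of \eqref{convol} the claim becomes that $\Delta_{j-1}\cdots\Delta_0\,b^{*j}$ collapses to a multiple of $f_j^{(j\alpha)}\propto(x-j)^{j\alpha-1}\mathbf 1_{x>j}$, which $\Delta_j$ kills --- but proving the collapse is precisely the combinatorial identity the Laplace computation supplies for free.
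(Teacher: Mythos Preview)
Your proof is correct and is, in essence, the Laplace-transform mirror of the paper's own argument. The paper works directly on the $x$-side with the convolution identities $x(u*v)=(xu)*v+u*(xv)$ and $(u*v)'=u'*v$, proving by induction that
\[
E_\ell\, g_j=\frac{j!}{(j-\ell)!}\,a_\ell*b^{*(j-\ell)},\qquad
a_\ell(x)=\frac{(x-\ell)^{(\ell+1)\alpha-1}}{\Gamma(1-\alpha)\Gamma(-\alpha)^\ell\Gamma((\ell+1)\alpha)}\mathbf 1_{x>\ell},
\]
and concluding from $D_j\cdot a_j=0$. Your intermediate formula $\widehat\Delta_{i-1}\cdots\widehat\Delta_0\,B^{j}=\dfrac{j!/(j-i)!}{\Gamma(-\alpha)^{i}}\,B^{j-i}\omega^{i}$ is exactly the Laplace transform of this (since $A\,\omega^{\ell}/\Gamma(-\alpha)^{\ell}$ is the transform of $a_\ell$), and your observation ``the second term vanishes because $q=i$'' is precisely the dual of the paper's ``$D_\ell$ annihilates $a_\ell$''. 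The one genuine twist is your preliminary stripping of $A$, which turns $\widehat D_i$ into the simpler $\widehat\Delta_i$; the $x$-side alternative you sketch at the end, $D_i(a*h)=a*\Delta_i h$, is this same trick translated back --- a mild reorganisation of the paper's bookkeeping. What your route buys is that the two derivative identities for $B$ and $\omega$ make the induction a single line; what it costs is the need to justify the Laplace correspondence (boundary terms at $x=0$, polynomial growth), which you handle correctly but which the paper's direct convolution computation sidesteps entirely.
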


\begin{proof}
To prove the theorem, we need the following elementary facts about convolution
products:
\begin{equation*}
x(u*v) = (xu)*v +u*(xv)\text;\qquad\qquad
(u*v)' = u'*v\text.
\end{equation*}
We first prove by induction that, for $0\le\ell\le k$, we have:
\[E_\ell\cdot g_k = \frac{k!}{(k-\ell)!} a_\ell*b^{*k-\ell}\text,\]
where
\[a_\ell(x) = \frac{(x-\ell)^{(\ell+1)\alpha-1}}
{\Gamma(1-\alpha)\Gamma(-\alpha)^\ell\Gamma\bigl((\ell+1)\alpha\bigr)}
\mathbf 1_{x>\ell}\text.\]
For $\ell=0$, this is obvious as $a_0 = a$. Otherwise, assume that the
identity is true at rank~$\ell$ and apply the operator~$D_\ell$ to it. Using
the above properties of convolution products, we have:
\[E_{\ell+1}\cdot g_k =
\frac{k!}{(k-\ell)!}(D_\ell\cdot a_\ell)*b^{*k-\ell}
+ \frac{k!}{(k-\ell-1)!} a_\ell*(xb)'*b^{*k-\ell-1}\text.\]
Since $D_\ell$ annihilates $a_\ell$, we conclude using the fact that
$a_\ell*(xb)' = a_{\ell+1}$ found using formula~\eqref{convol}.

At $\ell = k$, we thus find $E_k\cdot g_k = k!\,a_k$. Since $D_k\cdot a_k =
0$, we have indeed $E_\ell\cdot g_k = 0$ for $\ell > k$.
\end{proof}

\section{Applications}
\label{sec.appl}
\label{sec:applications}

\newcommand{\schrarrow}{\xrightarrow{\setlength{\unitlength}{3pt}\begin{picture}(4,1)\put(2,-1.5){\rule{.5pt}{3pt}}\end{picture}}}

In this section, we apply our results to the analysis in limit law of random
sampling algorithms. In all cases, this complexity is linked to a threshold
sum process that falls within the conditions of Theorem~\ref{thm:main}. Among
the three regimes in this theorem, the most favorable is the first one, with
the scaling factor~$t$ meaning that the algorithm has linear complexity.

In the following, we consider an anticipated rejection algorithm based on a
process with survival probability at time~$t$ asymptotic to~$c\,t^{-\alpha}$;
the algorithm consists in running the process repeatedly until it reaches
time~$t$. Since the successful run takes time~$t$, the complexity normalized
by~$t$ follows a Darling-Mandelbrot distribution \emph{shifted by one}, with
characteristic function $e^{is}\phi_\alpha(s)$ (this coincides with Darling's
initial definition). We denote by $\mathcal D(\alpha)$ this shifted
distribution.

In some cases, the algorithm has a second round of rejection on top of
anticipated rejection, \textit{i.e.}, it may fail and be restarted upon
reaching the target~$t$. Let us assume that it succeeds with a fixed
probability~$p$. The overall complexity of the algorithm is then of the form
$Y_1+\dotsb+Y_Z$, where the $Y_i$'s are independent variables following the
law $\mathcal D(\alpha)$ and~$Z\ge1$ is geometrically distributed with
parameter~$p$. Let $\mathcal D(\alpha,p)$ denote such a distribution
and~$e^{is}\phi_{\alpha,p}(s)$ be its characteristic function. We have:
\begin{equation} \label{dmp}
\phi_{\alpha,p}(s) =
\frac{p
\,\phi_\alpha(s)}{1 - (1-p)e^{is}\phi_\alpha(s)}
= 
\Biggl(1 - 
\sum_{n=1}^\infty \frac{\frac{1-p}{p} n + \alpha}{n-\alpha}\frac{(is)^n}{n!}
\Biggr)^{-1}\text.
\end{equation}
This situation typically arises when each step of the algorithm consists in
growing the sampled object by an increment $s_1,\dotsc,s_k$ with respective
probabilities $p_1,\dotsc,p_k$. In this case, there is a possibility that the
sample \emph{misses} the target size~$t$ by hopping over it. In the aperiodic
case (where $s_1\wedge\dotsb\wedge s_k = 1$), this occurs with an asymptotic
probability $p = 1/\delta$ where $\delta = \sum_i p_i s_i$ is the \emph{drift} of
the process. Slightly more subtle is the situation in which the
$s_i$'s are not all non-negative (but still the drift is positive), an
eventuality discussed in Section \ref{sec.holon}.
Examples are detailed below.

Let $Y$ be a random variable following the distribution~$\mathcal D(\alpha)$.
The moments of~$Y$ can be recovered by Taylor expansion of the expression
\eqref{dm} multiplied by~$e^{is}$. In particular, we have:
\[\mathbb E(Y) = \frac1{1-\alpha}\text;\qquad\qquad
\mathbb V(Y) = \frac\alpha{(1-\alpha)^2(2-\alpha)}\text.\]
As convergence in distribution implies convergence of moments, this will
enable us to compute the asymptotic behavior of the moments of the complexity
of the algorithms. The distribution~$\mathcal D(\alpha,p)$ can be treated in
the same way using \eqref{dmp}. This yields:
\[\mathbb E(Y) = \frac1{p(1-\alpha)}\text;\qquad\qquad
\mathbb V(Y) = \frac{\alpha +
2(1-p)(1-\alpha)}{p^2(1-\alpha)^2(2-\alpha)}\text.\]

\subsection{Prefixes of Motzkin paths and directed animals}

\newcommand\dm[1]{\smash{$\mathcal D\bigl(#1\bigr)$}}

The simplest algorithm that fits in our framework is probably the one
described in \cite[]{barcucci}, which samples prefixes of Motzkin paths (\textit{i.e.},
lattice paths with steps in $\{\nearrow,\searrow,\rightarrow\}$ never stepping
lower than their origin). Using a bijection of Penaud, they thus get a random
sampling algorithm for directed animals. A generalization appears in
\cite[]{barcucci}, which deals with the case where there are several possible
steps of each type (colored Motzkin prefixes).

The algorithm is very simple: the path is built by adding random steps one at
a time. If, at any time, the path steps below the origin, the algorithm is
started over from scratch. If the target size~$n$ is reached, the path is
output.
To our knowledge, this is the best known algorithm for exactly
sampling such structures, with the exception of the special case in
which there is no $\rightarrow$ step 
(\textit{i.e.}, \emph{prefixes of Dyck paths}).%
\footnote{To sample these, a better (in fact, optimal)
  algorithm consists in using the algorithm of \cite[]{motzkin} to
  sample a pointed binary plane tree and using classical bijections to
  get a Dyck prefix.}




\begin{proposition} \label{barcucci}
Assume the number of possible $\nearrow$ and $\searrow$ steps is the same.
Let $Y_n$ be the number of steps drawn by the algorithm to sample a path of
length~$n$. As $n$ tends to infinity, the random variable $Y_n/n$ tends in
distribution to the law~\dm{\frac12}.
\end{proposition}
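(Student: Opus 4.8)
The plan is to reduce Proposition~\ref{barcucci} to an application of Theorem~\ref{thm:main} with $\alpha=1/2$, by identifying the right threshold sum process hidden in the algorithm. The algorithm draws steps from $\{\nearrow,\searrow,\rightarrow\}$ (with the stated colour symmetry between $\nearrow$ and $\searrow$) one at a time, restarting whenever the path dips below the origin, and stopping when $n$ steps have been successfully drawn. So each ``trial'' is a run of the step-walk until either it goes below $0$ (failure, restart) or it reaches length $n$ (success, output). The number of steps consumed by a failed trial that first dips below $0$ at step $k$ is $k$, and the total cost $Y_n$ is the sum of the lengths of all the failed trials plus $n$.

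First I would make precise the claim that the length of a single trial-walk is a random variable $X$ whose complementary CDF $F(x)=\mathbb P(X\ge x)$ is the probability that the colour-symmetric step-walk stays nonnegative for the first $x$ steps. By the colour symmetry, the $\nearrow$ and $\searrow$ steps are equiprobable, so the height coordinate performs (after erasing $\rightarrow$ steps, or by a standard reflection/ballot argument) a symmetric walk, and the event of survival up to time $x$ is exactly the event that a Motzkin-type prefix of length $x$ stays nonnegative. The count of such prefixes, divided by the total number of length-$x$ words, has the classical asymptotics $F(x)\sim c\,x^{-1/2}$ for some explicit $c>0$ --- this is the standard ``return probability'' / ballot-type estimate (equivalently, $\mathbb P(\text{survival})\sim c/\sqrt{x}$ for a mean-zero, finite-variance walk, by the Sparre Andersen / local limit theory). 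This is the one genuinely model-specific input and is where I would be most careful: one must check that the colour multiplicities do not disturb the $x^{-1/2}$ exponent, only the constant $c$, and that the walk is aperiodic in the relevant sense.

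Next I would observe that $Y_n$ is precisely the threshold sum process $Y_t$ of Section~\ref{sec:TSP} at threshold $t=n$: letting $X_0,X_1,\dots$ be the i.i.d.\ trial lengths, $I(n)$ is the index of the first trial that reaches length $n$ (i.e.\ the first $X_i\ge n$), and the total cost is $X_0+\dots+X_{I(n)-1}$ (the failed trials) plus the $n$ steps of the successful trial. That ``$+n$'' is exactly the shift-by-one in the normalization by $t=n$, which is why the limit is the \emph{shifted} Darling--Mandelbrot law $\mathcal D(\alpha)$ with characteristic function $e^{is}\phi_\alpha(s)$, as explained at the start of Section~\ref{sec:applications}. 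Since $F(x)\sim c\,x^{-1/2}$ with $\alpha=1/2<1$, Theorem~\ref{thm:main} gives that $(Y_n-n)/n=Y_{I(n)-1}\text{-sum}/n$ converges to the Darling--Mandelbrot law of parameter $1/2$, hence $Y_n/n$ converges to $\mathcal D(\tfrac12)$.

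The only subtlety to flag is that the successful trial is a walk \emph{conditioned to reach length $n$ before dipping}, and one should note that its contribution to $Y_n$ is deterministically $n$ (the algorithm stops exactly at size $n$), so no conditioning issue arises in the cost --- it is genuinely the clean ``$Y_t+t$'' picture. I expect the main obstacle to be purely the tail estimate $F(x)\sim c\,x^{-1/2}$: establishing the exact exponent and the positivity of the constant in the coloured setting, rather than anything about the limit law itself, which then follows mechanically from Theorem~\ref{thm:main}.
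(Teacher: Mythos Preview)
Your proposal is correct and follows essentially the same route as the paper's own proof: identify the survival probability $F(x)\sim c\,x^{-1/2}$, recognise $Y_n$ as the threshold sum process plus the deterministic cost~$n$ of the successful trial, and invoke Theorem~\ref{thm:main} with $\alpha=1/2$. The only cosmetic difference is that the paper obtains the tail estimate via the enumeration $m_n\sim c\,k^n n^{-1/2}$ of coloured Motzkin prefixes (so that $\mathbb P(X>n)=m_n/k^n$), whereas you invoke Sparre Andersen / ballot-type arguments for a mean-zero walk; both are standard and equivalent here.
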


In particular, we recover estimates of the expected value and variance given
by Barcucci et al., namely:
\[\mathbb E(Y_n)\sim 2n\text;\qquad\qquad\mathbb V(Y_n)\sim\frac43n^2.\]

\begin{proof}
Let $k$ be the total number of available steps. If there are as much different
$\nearrow$ and $\searrow$ steps, the number~$m_n$ of Motzkin prefixes of
length~$n$ satisfies $m_n\sim ck^nn^{-1/2}$, where $c$ is a constant.

Let $X$ be the random variable counting the number of steps before a random
path goes below the origin. We have $X > n$ if and only if the first $n$ steps
form a Motzkin prefix, which happens with probability $m_n/k^n\sim cn^{-1/2}$.

As outlined above, the random variable $Y_n$ onsists of two parts: the cost
of the unsuccessful trials, which follows a threshold sum process with base
distribution~$X$ and threshold~$n$, and the cost of the final successful
trial, which is~$n$. By Theorem~\ref{thm:main}, the quotient~$Y_n/n$ thus
tends to the shifted law~\dm{\frac12}.
\end{proof}

\subsection{Prefixes of Schröder paths}
\label{sec.schro}

A variant of the previous algorithm, sampling prefixes of Schröder paths, is
found in \cite[]{penaud}. A Schröder path has the same constraints as a
Motzkin path and takes steps in~$\{\nearrow,\searrow,\schrarrow\}$ (where
$\schrarrow$ has length~$2$). As shown in \cite[]{king}, these paths are also
in bijection with directed lattice animals, this time on the \emph{king's
lattice} (Figure~\ref{fig.dirpaths}).

The algorithm is similar to the one above, but the steps
$\nearrow,\searrow,\schrarrow$ are taken with respective probabilities
$\rho,\rho,\rho^2$ with $\rho = \sqrt2-1$. There is another difference: when
sampling for a target size~$n$, it is possible to jump from~$n-1$ to~$n+1$ by
generating a~$\schrarrow$. In this case, we must discard the path and
start over. As the following result shows, this modifies slightly the limit
behavior of the complexity while keeping it linear.

\begin{proposition} \label{schroeder}
Let $Y_n$ be the total length of the steps drawn by the algorithm to sample a
Schröder prefix of length~$n$. The random variable $Y_n/n$ tends in
distribution to the law~\dm{\frac12,\frac{2+\sqrt2}4}.
\end{proposition}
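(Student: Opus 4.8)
The plan is to recognise this as a further instance of the second-round-of-rejection framework preceding this subsection, characterised by the tail exponent $\alpha$ of the base distribution of the governing threshold sum process and by the asymptotic probability $p$ that a run reaching the target length lands on it exactly rather than hopping over it. Once $\alpha=\tfrac12$ and $p=\tfrac{2+\sqrt2}{4}$ are established, Theorem~\ref{thm:main} together with formula~\eqref{dmp} gives $Y_n/n\to\dm{\frac12,\frac{2+\sqrt2}4}$, exactly as in the proof of Proposition~\ref{barcucci}.

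First I would pin down $\alpha$. Ignoring horizontal lengths, the \emph{height} of the sampled path performs a Motzkin-type walk with steps $+1,0,-1$ of weights $\rho,\rho^2,\rho$; since $2\rho+\rho^2=1$ these weights form a probability distribution, and being symmetric in $\pm1$ the walk is centered. Hence the probability that a free run keeps its height nonnegative for $m$ steps is asymptotic to $c\,m^{-1/2}$ — the persistence probability of a centered aperiodic walk, equivalently a $(1-z)^{-1/2}$ singularity at radius $1$ of the associated meander generating function, as in the analyses of \cite[]{penaud,barcucci}. The base variable $X$ of the relevant threshold sum process is the total length covered by a failed run; as the length after $m$ steps lies between $m$ and $2m$ and concentrates around $\delta m$ with $\delta=2\rho+2\rho^2$, the probability $\bP(X\ge t)$ is still $\sim c' t^{-1/2}$. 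Thus Theorem~\ref{thm:main} applies with $\alpha=\tfrac12<1$ and scaling $t=n$: the threshold-sum part of $Y_n$, divided by $n$, converges to the Darling--Mandelbrot law of parameter $\tfrac12$, and the successful run (cost $\to n$) supplies the shift by one.

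Next I would pin down $p$. The horizontal positions successively visited by the path form a renewal sequence whose increments equal $1$ with probability $2\rho$ (steps $\nearrow$, $\searrow$) and $2$ with probability $\rho^2$ (step $\schrarrow$); since $1\wedge 2=1$ these increments are aperiodic, with mean $\delta=2\rho+2\rho^2=4-2\sqrt2$. By the renewal theorem the probability that the integer $n$ is a renewal point — i.e.\ that the run hits length exactly $n$ instead of jumping from $n-1$ to $n+1$ — tends to $1/\delta=\tfrac{2+\sqrt2}{4}$. Feeding $\alpha=\tfrac12$ and $p=\tfrac{2+\sqrt2}{4}$ into~\eqref{dmp} yields the claimed limit. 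The main obstacle is the subtlety already flagged in the general discussion: conditioning on the long-range event that the run survives to length at least $n$ must not distort the increment law in the $O(1)$-window around position $n$ (in particular, near the absorbing boundary a $\schrarrow$ is over-represented, so the conditioning does matter \emph{there}). This is settled by observing that, on the survival event, the height near horizontal position $n$ is typically of order $\sqrt n$, so locally the walk is far from its boundary and behaves like the unconditioned i.i.d.\ walk, which is precisely what makes the renewal theorem applicable; and since all step lengths $1,1,2$ are non-negative, none of the extra care of Section~\ref{sec.holon} is required.
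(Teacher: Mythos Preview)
Your argument is correct and lands on the right parameters, but it takes a different route from the paper on both counts.

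For $\alpha$, the paper simply quotes the enumeration asymptotics $s_n\sim c\rho^{-n}n^{-1/2}$ for Schr\"oder prefixes, so that the probability $p_n=s_n\rho^n$ of the algorithm visiting length exactly~$n$ is $\sim cn^{-1/2}$; the tail $\bP(X\ge n)=p_n+\rho^2 p_{n-1}\sim(1+\rho^2)cn^{-1/2}$ then falls out in one line. You instead pass through the persistence exponent of the height walk and then convert step-count to length, which is sound but less direct here.

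For $p$, the paper computes the conditional hitting probability as the ratio $p_n/(p_n+\rho^2 p_{n-1})\to 1/(1+\rho^2)=(2+\sqrt2)/4$. Because $p_n$ is already the probability \emph{under the height constraint}, this calculation has the survival conditioning baked in and requires no further justification. Your renewal-theorem route gives the same number (indeed $1+\rho^2=2\rho+2\rho^2=\delta$ thanks to the normalisation $2\rho+\rho^2=1$), and it has the merit of exhibiting the general $p=1/\delta$ mechanism announced in the paper's preamble; but it obliges you to argue that conditioning on survival to length~$n$ does not distort the step law near the target. Your resolution via ``height is of order $\sqrt n$, so the boundary is far'' is the right intuition but is heuristic as written, whereas the paper's two-line ratio computation sidesteps the issue entirely.
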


From \eqref{dmp}, we get the expected value and variance of~$Y_n$:
\[\mathbb E(Y_n)\sim\bigl(8-4\sqrt2\bigr)n\text;\qquad\qquad
\mathbb V(Y_n)\sim\frac{16}{3}\bigl(16-11\sqrt2\bigr)n^2\text.\]

\begin{proof}
Let $s_n$ be the number of Schröder prefixes of length~$n$ and $p_n$ be the
probability to reach one of them. As we have $s_n\sim c\rho^{-n}n^{-1/2}$, we
have $p_n\sim cn^{-1/2}$, where $c$ is a constant.

Let $X$ be the random variable counting the length of the path sampled before
it goes below the origin. The event $X\ge n$ can occur in two ways: either we
sample a Schröder prefix of length~$n$ or a prefix of length~$n-1$ followed by
a $\schrarrow$; the probability of this is
$p_n+\rho^2p_{n-1}\sim(1+\rho^2)cn^{-1/2}$. In the same way as for
Proposition~\ref{barcucci}, the time necessary to reach this tends in
distribution to~\dm{\frac12}.

Finally, out of the two above possibilities, we are interested only in the
case where we draw a Schröder prefix of length~$n$. This happens with
probability 
$p_n/(p_n+\rho^2p_{n-1})\to1/(1+\rho^2) = (2+\sqrt2)/4$.
The number of times the size~$n$ is reached is geometrically distributed,
hence the result.
\end{proof}

\begin{figure}[!tb]
\begin{center}
\vspace{2mm}
\includegraphics{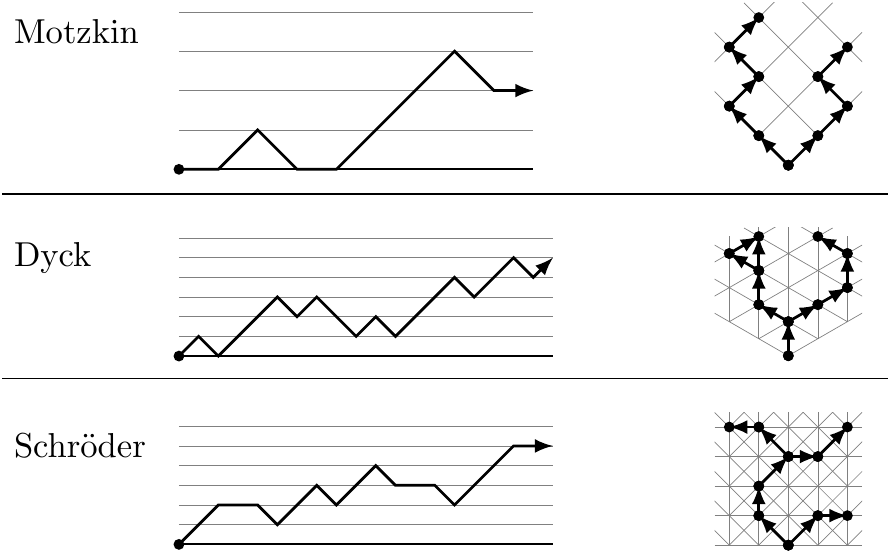}
\end{center}
\caption{\label{fig.dirpaths}Families of meanders in bijection with
  directed lattice animals.}
\end{figure}


\subsection{Unary-binary trees}
\label{sec.UniBin}

Another recent anticipated rejection algorithm appears in~\cite[]{motzkin},
sampling unary-binary plane trees. The algorithm works by letting a tree grow
from size~$1$ to~$n$ using a \emph{grafting} process akin to Rémy's algorithm
for binary trees, based on a holonomic equation. This process may fail,
however, in which case the algorithm is restarted. For our analysis, we use
the following two facts: first, the probability of reaching at least the
size~$n$ during the growth procedure satisfies $p_n\sim cn^{-1/2}$, with $c$ a
constant; second, at each step, the tree grows by~$1$ or~$2$ nodes with
respective probabilities $2/3$ and $1/3$. If this takes the size of the tree
from~$n-1$ to~$n+1$, the algorithm is restarted.

\begin{proposition}
Let $Y_n$ be the number of nodes of the trees built by the algorithm to sample
a tree with~$n$ nodes. The random variable $Y_n/n$ tends in distribution to
the law~\dm{\frac12,\frac34}.
\end{proposition}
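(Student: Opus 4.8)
The plan is to mirror the proof of Proposition~\ref{schroeder} exactly, since the unary-binary tree algorithm fits the same template: an anticipated rejection algorithm whose survival probability has the $\tfrac12$-tail, with an additional fixed-probability second round of rejection caused by hopping over the target size. First I would identify the base random variable $X$ as the number of nodes added before the growth procedure fails, so that the event $X\ge n$ corresponds to the tree reaching at least size~$n$, which by hypothesis happens with probability $p_n\sim c\,n^{-1/2}$. This puts us squarely in the $\alpha=1/2$ regime of Theorem~\ref{thm:main}: the cost of the unsuccessful trials is a threshold sum process with threshold~$n$, and after dividing by~$n$ it converges to $\mathcal D(\tfrac12)$ once we add the cost~$n$ of the final (reaching) trial.

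The second ingredient is the extra rejection. Conditioned on reaching size at least~$n$, the growth procedure either lands exactly on~$n$ or jumps from~$n-1$ to~$n+1$ via a $2$-node step. Since steps of size~$1$ and~$2$ occur with probabilities $2/3$ and $1/3$, the drift is $\delta = 2/3 + 2\cdot 1/3 = 4/3$, and by the aperiodic-case discussion following \eqref{dmp} the asymptotic probability of \emph{missing} the target is $1/\delta = 3/4$; equivalently, one can compute directly that, conditioned on crossing level~$n$, the renewal-theory probability of hitting~$n$ exactly tends to $1/\delta = 3/4$, so the success probability per round is $p = 3/4$. The number~$Z$ of rounds until success (i.e.\ until the size~$n$ is hit exactly, after which the tree's holonomic growth is restarted upon the separate failure, but here only the hopping rejection is present) is geometrically distributed with parameter~$3/4$, independent of the per-round costs. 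Hence the total number of nodes built is $Y_1+\dotsb+Y_Z$ with the $Y_i$ i.i.d.\ of law $\mathcal D(\tfrac12)$, which is by definition the law $\mathcal D\bigl(\tfrac12,\tfrac34\bigr)$, and $Y_n/n$ converges in distribution to it.

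The only genuinely delicate point — the same one that is glossed in Propositions~\ref{barcucci} and~\ref{schroeder} — is justifying that the $\mathcal O(1)$ boundary fluctuations are asymptotically negligible and that the "hop probability" really converges to $1/\delta$: one must check that conditioning on $\{X \ge n\}$ (a rare event of probability $\Theta(n^{-1/2})$) does not distort the local behaviour of the increments near level~$n$, so that the overshoot distribution is governed by the stationary renewal measure of the $\{1,2\}$-valued step. Since the step distribution has bounded support and the conditioning event is determined by the path up to the crossing time, this follows from a standard last-exit / renewal decomposition argument, but it is the step I would write out with the most care. Everything else is a direct invocation of Theorem~\ref{thm:main} and of the definition of $\mathcal D(\alpha,p)$ via \eqref{dmp}, exactly as in the two preceding proofs.
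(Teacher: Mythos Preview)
Your approach is essentially the paper's: invoke Theorem~\ref{thm:main} with $\alpha=\tfrac12$ for the anticipated-rejection part, then compound geometrically with the hop-over rejection to land in $\mathcal D(\tfrac12,\tfrac34)$, exactly as in Proposition~\ref{schroeder}. Two small points: first, you have a slip where you say ``the asymptotic probability of \emph{missing} the target is $1/\delta=3/4$'' and then in the same sentence that the probability of \emph{hitting} is $1/\delta=3/4$; only the latter is correct (with $\delta=4/3$), and it is the one you actually use. Second, the paper obtains the $3/4$ not via the drift formula but by the direct ratio $p_n/(p_n+\tfrac13 p_{n-1})\to 3/4$, mirroring the Schr\"oder computation; your renewal/drift argument is an equivalent way to reach the same constant.
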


Again, we deduce the expected value and variance from \eqref{dmp}:
\[\mathbb E(Y_n)\sim\frac83n\text;\qquad\qquad\mathbb V(Y_n)\sim\frac{32}9n^2
\text.\]

\begin{proof}
The proof is identical to the one of Proposition~\ref{schroeder}. The form
of the probability~$p_n$ shows that the time necessary to reach size~$n$ is,
normalized by~$n$, distributed like~\dm{\frac12}. Knowing we have reached at
least~$n$ nodes, the probability to hit exactly~$n$ is
$p_n/(p_n+\frac13 p_{n-1})\to \frac34$. This concludes the proof.
\end{proof}

\noindent
We remark that another way of sampling unary-binary trees with $n$
vertices is through the classical bijection with Motzkin excursions of
length $n-1$. These are in 1-to-$n$ correspondence with Motzkin paths
of length $n$ and ending at ordinate $-1$, which are themselves in
bijection with prefixes of Motzkin excursions, of length $n$ and
ending at odd ordinate. Such a prefix can be sampled using the
procedure of the previous section and a rejection scheme, but the
probability of rejection (checking if the final ordinate is odd) is
then asymptotically $1/2$ instead of $1/4$, leading to a slightly
worse complexity.


\subsection{More general holonomic systems}
\label{sec.holon}

The algorithmic strategy outlined in~\cite[]{motzkin} is potentially
ameanable to a variety of problems. Several combinatorial structures,
with a size parameter $n$, have generating functions $Z_n$ satisfying a
holonomic equation, \textit{i.e.}, an equation of the form
\be
\sum_{i \in I} P_i(n) Z_{n-i} = 0
\ee
where $I$ is a finite subset of $\mathbb{Z}$, and $P_i(n)$ are
polynomials with rational coefficients such that $P_0(n)\ne0$. Let $d$ the
maximal degree among the $P_i$'s, and $p_i$ the coefficient of degree $d$ in
$P_i$ (possibly zero). Asymptotically, we have
\be
\sum_{i \in I} p_i {Z_{n-i}}
= {Z_n}  \mathcal{O}(n^{-1})
\ef.
\label{eq.386243}
\ee
Suppose that the holonomic equation can be rewritten as
\be
\binom{n}{d}
Z_n = \sum_{i \in I} P_i(n) Z_{n-i}
\label{eq.48765654343}
\ee
(up to a redefinition of $P_0$), so that the coefficients of the
$P_i$'s are \emph{positive} rationals, when $P_i$ is written in the
polynomial basis $\binom{n-i}{k}$.\footnote{The condition on the form
  of the left-hand side can be relaxed to some extent, we treat here a
  simplified situation in order to lighten the notation.}  We can
interpret the $k$-th basis polynomial as associated to the enumeration
of objects with $k$ marked unit elements. The positivity of the
coefficients may prelude to the design of a bijective interpretation
of relation (\ref{eq.48765654343}), in which the marks undergo a local
dynamics, implemented with small complexity. We have an analogue of
equation (\ref{eq.386243}), of the form
\be
{Z_n} - \sum_{i \in I} p_i {Z_{n-i}}
= {Z_n}
\mathcal{O}(n^{-1})
\ef,
\label{eq.386243b}
\ee
where now $p_i$ is the coefficient of $\binom{n-i}{d}$ in $P_i$. Let
$\lim_{n \to \infty} \ln Z_n/n = \zeta$.  Define the \emph{drift}
$\delta$ as the average of $i \in I$, according to the distribution
$p_i \zeta^{-i}$ (which is normalised).  The bijection discussed above
can be turned into an algorithm, possibly of anticipated rejection.
This is what happens in~\cite[]{motzkin}, for binary and unary-binary
trees. In the first case the algorithm has no reject, in the second case 
anticipated rejection is required. Anticipated rejection may be needed
when the bijection involves, on the RHS, a combinatorial object with
less than $d$ marks. In some cases, the missing marks can be resampled
uniformly without introducing any bias, while in other cases this is
not possible, and the growing object has to be rejected.

The size at each algorithmic step $t$ changes by a random value $i_t
\in I$. This happens asymptotically with probabilities $p_i
\zeta^{-i}$. If the drift is positive, the size makes a directed
random walk with a positive slope, which, with high probability,
either intersects $n$ after $\sim n/\delta + \mathcal{O}(\sqrt{n})$
steps, or hops over this value and goes towards infinity. Thus, if
anticipated rejection is required, with exponent $\a$ in the
appropriate range, we are in the context of the geometric convolution
of the Darling--Mandelbrot distribution discussed at the beginning of
the section. 

If $I \subset \mathbb{N}^+$ (we say that $I$ is
\emph{non-backtracking} in this case), the walk either passes by $n$
exactly once, or misses it; asymptotically, this happens with
probability $1/\delta$ and $1 - 1/\delta$, respectively (provided $I$
is aperiodic, that is, has no common divisor $>1$). If the value $n$
is missed, we shall restart the algorithm.

If $I$ has support on both positive and negative integers (and thus is
\emph{backtracking}), the walk may intersect $n$ more than once, and
the first hit of $n$ may occur after that larger values have been
reached. This makes the optimisation and analysis of the algorithm
slightly more complicated.  Any of the hitting events gives an
unbiased sample, and a concrete algorithm will just take the first
one. Having a positive number of hitting events happens now with
probability smaller than $1/\delta$, but still $\mathcal{O}(1)$ (the
exact asymptotic probability involves a complicated expression in the
$p_i$'s, an analysis postponed to the following paragraphs). At any
time, possibly in light of the current size parameter, we have the
right of restarting the algorithm.  Restarting as soon as a value
higher than $n$ is attained is a feasible choice, but non-optimal by a
constant factor in complexity, as at values $n'=n+\mathcal{O}(1)$ we
still have a probability $\mathcal{O}(1)$ of hitting $n$ in
$\mathcal{O}(1)$ further steps, that largerly pays off against the
expensive restart procedure.  It is more efficient to restart the
algorithm as soon as we can confidently suppose that, with high
probability, the walk has reached a size larger than $n$ for never
coming back.  Based on the universal behaviour of drifted
one-dimensional random walks, a generic simple such strategy, which is
asymptotically optimal, is to restart as soon as the current size
reaches $n+\sqrt{n}$.

We now analyse the probability of having a positive number of
intersections, in the backtracking case.  The asymptotic probability
$\pi_s$ that there are $s$ intersections has the form $\pi_0 = a$, and
$\pi_s = b c^{s-1}$ for $s \geq 1$. This is due to the fact that
bridges at height $n$ are independent events, and are thus
concatenated geometrically. The resulting convoluted distribution is
thus $\mathcal{D}(\a,a)$, and we shall determine~$a$.

Normalisation gives $a+b(1-c)^{-1}=1$. The average number of
intersections is $b(1-c)^{-2}$, and must be given by
$\delta^{-1}$. However, these two trivial equations are not sufficient
to determine $a$, and we need a third relation. We can determine the
average of $\binom{s}{2}$, divided by the average of $s$, which is
$c/(1-c)$. In fact, the latter is represented combinatorially by a
path crossing $n$, in which one of the crossings is marked, and the
former is the analogous event in which two crossings are marked. The
two semi-infinite parts of the walk have analogous distributions in
the two processes, and the part of the walk between the two marks is a
random bridge, independent from the rest of the path. So, this
accounts to evaluate the generating function of bridges, at
criticality, for the asymptotic step rates $p_i \zeta^{-i}$. Such a
quantity is written as a Cauchy integral involving the kernel
(Laurent) polynomial, $K(\omega) = \sum_i p_i \omega^i$, and is
written in terms of the residues at those roots of the polynomial,
which are series in the parameter $\omega$ with no Laurent part
(called \emph{small roots}, see~\cite[]{BanFlaj}).

It is legitimate to ask whether there exist concrete applications in
which the set $I$ described above is backtracking.%
\footnote{This question has been posed also by the anonymous referee.}
A detailed discussion of this point would be besides the scope of the
present article. Let us however provide a simplistic example, of a
recursion in which $Z_n$ is a rational series, satisfying a holonomic
equation with \emph{constant} coefficients. The example shall
illustrate how backtracking recursions may arise easily from small
modifications of non-backtracking problems, preserving the
probabilistic interpretation of the associated generating series.  It
is well known that Fibonacci numbers satisfy the recursion $F_n =
F_{n-1} + F_{n-2}$, with suitable initial conditions $F_0=0$ and
$F_1=1$. Such a recursion has set $I=\{1,2\}$, thus it does not
provide an example of the form we seek.
%
These numbers can be refined to integer-valued polynomials, e.g.\ as
$F_n' = F_{n-1}' + x F_{n-2}'$, or as $F_n'' = x F_{n-1}'' + x
F_{n-2}''$ (in the two cases, again for suitably chosen initial
conditions, the polynomials are trivially related: $F_n''(x)=x^n
F_n'(x^{-1})$). In our prespective of exact sampling, $n$ is the size,
and $x \in \mathbb{R}^+$ is a parameter in the measure on the
associated combinatorial objects (Fibonacci trees, or dimer-monomer
configurations on an interval).  Combining the equations at two
consecutive sizes, we have $(1+x) F_n' = F_{n+1}' + x^2 F_{n-2}'$, and
$(1+x) F_n'' = F_{n+1}'' + x F_{n-2}''$, in the two cases.
For $x \in \mathbb{R}^+$, both these equations have set $I=\{-1,2\}$,
and are thus backtracking.
Of course, we cannot be satisfied with these examples either: these
quantities satisfy also the simpler customary Fibonacci relations,
which provide a simpler, non-backtracking implementation of the
sampling algorithm. In other words, both of the associated
polynomials, $1-w(1+x)+w^3 x=(1-w)(1-wx-w^2x)$ and $1-w(1+x)+w^3
x^2=(1-wx)(1-w-w^2x)$, factorise.

Consider now any convex combination of the two relations, e.g.
\be 
2(1+x) F_n = 2 F_{n+1} + x(x+1) F_{n-2} 
\ee 
which has associated polynomial $1-w(1+x)+\frac{1}{2} w^3 x(x+1)$, that is not
factorisable.  Still, under suitable initial conditions (such as
$F_0=0$, $F_1=1$, $F_2=1+x$), the polynomials $2^n F_n(x)$ have
positive integer coefficients, with a potential combinatorial
interpretation.\footnote{The positivity property still holds
for the homogeneous, more refined polynomials associated to the
equation
\begin{gather*}
(2x+y+z) 
F_n = F_{n+1} + 
x(y+z)(2x+y+z) F_{n-2}
\\
F_0=0\,;\quad F_1=1\,;\quad F_2=x + y\,.
\end{gather*}
}


\subsection{Random walks in conical domains}

In this section, we study models of constrained random walks. The
complexity of the anticipated rejection algorithm is governed by the
survival probability of the model, that is, the probability of a
random walk of length~$t$ to satisfy the constraints. The analysis of
survival probability for this class of problems has a long history,
that dates back at least to Sommerfeld at the beginning of the
century. A review of results can be found in \cite[Chapter~7]{redner},
and a modern approach with a rigorous derivation can be found in
\cite[]{Denis}.\footnote{We thank M.~Bousquet-M\'elou and K.~Rashel
  for pointing out this reference.}

The first case we describe is random walks in the square lattice
constrained to remain in a wedge of angle~$\theta$. As explained in
\cite[Section~7.2]{redner}, the survival probability satisfies in this
case $F(t)\sim ct^{-\alpha}$ where $2\theta \alpha = \pi$.  An
identical result holds for other regular lattices (such as triangular,
hexagonal,\ldots).\footnote{This is not discussed in the synthetic
  presentation of \cite[]{redner}, but it could be derived on
  identical basis, and it is implicit in the large generality of the
  results in \cite[]{Denis}.}
This gives an exponent $\alpha$ ranging from $1/4$ (for excluding just
a half-line) to arbitrarily large (for a narrow wedge); however,
arbitrarily small values of~$\alpha$ can be found by considering
values of $\theta$ greater than~$2\pi$, by taking into account the
winding number of the walk. In particular, we find:
\begin{itemize}
\item for $\theta > \pi/2$, the algorithm has linear average complexity and
limit law \dm{\frac\pi{2\theta}}.
\item for $\theta = \pi/2$, the algorithm has average complexity $n\log n$
and exponential limit law;
\item for $\theta < \pi/2$, the algorithm has average complexity
\smash{$\bigo\bigl(n^{\frac\pi{2\theta}}\bigr)$} and an exponential limit law.
\end{itemize}
For specific values of~$\theta$, these walks can be realized as
\emph{walks in the quarter plane} with some prescribed steps
\cite[]{bousquet}. For instance, Gessel's walks, with steps
$\{\swarrow,\leftarrow,\nearrow,\rightarrow\}$, correspond to walks in
the square lattice in a wedge of angle~$3\pi/4$. The variant of
Kreweras' walks with steps
$\{\downarrow,\swarrow,\leftarrow,\uparrow,\nearrow,\rightarrow\}$
correspond to walks in the triangular lattice in a wedge of
angle~$2\pi/3$. Similarly, the other two variants with steps
$\{\downarrow,\leftarrow,\nearrow\}$ and
$\{\swarrow,\uparrow,\rightarrow\}$ correspond to walks in the same
wedge, for the oriented triangular lattice, in which the edges are
oriented in an alternating way around each vertex, the two families of
walks corresponding to the two possible orientations (see Figure~\ref{fig.krewges}).
The anticipated rejection algorithm thus has linear
complexity in both cases, with respective limit laws \dm{\frac23} (Gessel) and
\dm{\frac34} (Kreweras, in the three realisations).

\begin{figure}[!tb]
\begin{center}
\vspace{2mm}
\includegraphics{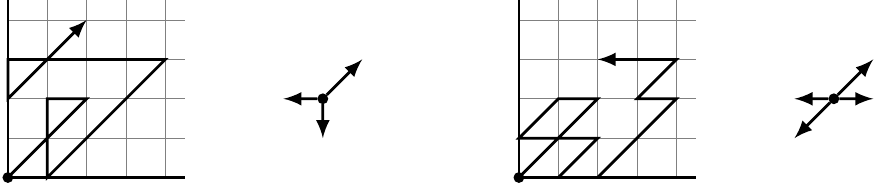}
\end{center}
\caption{\label{fig.krewges}Left: example of Kreweras' walk. Right:
  example of Gessel's walk.}
\end{figure}

A more complicated case is random walks in $\integers^3$ constrained in a cone
defined by $\theta < \theta_{\max}$ (in spherical coordinates). In this case,
the survival probability is $F(t)\sim ct^{-\nu/2}$, where $\nu$ is the
smallest positive number such that $P_\nu(\cos\theta_{\max}) = 0$, where
$P_\nu$ is the Legendre function \cite[Section~7.3]{redner}. This allows for
the exponent $\alpha = \nu/2$ to be any positive number. In particular, since
$P_2(x)  = (3x^2-1)/2$, an exponent $\alpha < 1$, and thus a linear-time
algorithm, is achieved for $\theta_{\max} > \arccos(1/\sqrt3)$.

In fact, generic cones in generic dimensions, and for a large class of
periodic lattices, can also be handled in this way. Full details can
be found in
\cite[]{Denis}, and, in particular, their Section 1.2 illustrates the
required precise technical assumptions. Let us summarise in few words
these hypotheses. There are four of them.  The first two are mild
requests on the shape of the cone, which in particular are
automatically satisfied in dimension 2. A third hypothesis allows for
long-range walk steps, provided that certain moments are finite (we
only considered walks with finite-range steps here). A fourth
hypothesis requires that the associated unbounded random walks undergo
isotropic diffusion, and always holds in absence of drift (as we
require here for having non-trivial asymptotics), up to applying an
appropriate affine transformation to the lattice.

Let $\Omega$ be a cone of $\reals^d$ and let 
$\Omega_0 = \Omega\cap\mathbb S^{d-1}$. Under the conditions detailed
in the reference, the survival probability in the cone~$\Omega$
satisfies:
\[F(t)\sim ct^{-\nu/2}\text,\]
where $\nu$ is the smallest positive number such that there exists a function
$h_\nu(\theta)$ on the unit sphere vanishing at the border of $\Omega_0$ and
satisfying:
\[\Delta_S h_\nu(\theta) = -\lambda h_\nu(\theta)\text,\qquad\qquad
\lambda = \nu(\nu + d - 2)\text,\]
where $\Delta_S$ is the spherical Laplace operator.

Thus, we are again in the conditions of Theorem~\ref{thm:main}, with $\alpha =
\nu/2$. The exponent $\nu$ is, however, difficult to compute in general.

\subsection{More complex random walk problems}

In the previous section we considered random walks on a lattice that
shall avoid some ``wall'' prescribed deterministically. Here we
consider a more complex problem in which the growing structure
produces the walls dynamically.  Say that two paths on a graph
\emph{intersect} if they share some vertex.  We have two classes of
problems: (P1) a walk $\omega$ of length $n$, starting at a neighbour
of the origin, such that there exists some infinite walk connecting
the origin to infinity and not intersecting $\omega$.  (P2) for $k
\geq 2$, $k$-tuples of walks $(\omega_1, \ldots, \omega_k)$, of length
$n$, starting from nearby vertices (e.g., aligned along a line), that
shall not intersect each other.


For undirected random walks, the simplest lattice is $\mathbb{Z}^D$,
\textit{i.e.}, with the $2D$ possible steps $\{ s^{\a} \}=\{ (0,0,\ldots,\pm
1,\ldots,0) \}$ uniformly chosen.  The analogue for directed random walks is
$\mathbb{N} \times \mathbb{Z}^{D-1}$, \textit{i.e.}, with the $2(D-1)$ possible steps
$\{ s^{\a} \}=\{ (1,0,\ldots,\pm 1,\ldots,0) \}$ uniformly chosen.
More generally, we may consider unbiased isotropic
(undirected) random walks,
\textit{i.e.}, walks that can perform steps $s^{\a} \in \mathbb{Z}^D$
with weight $w_{\a}$, such that $\sum_\a w_{\a} s_i^{\a} = 0$ for all
$i=1,\ldots, D$ and $\sum_\a w_{\a} s_i^{\a} s_j^{\a} = C \delta_{ij}$. In
the
directed variant we have $s_1^{\a} = 1$ for all steps $\a$, and all
other compatible constraints are left as are.


The associated exponents, when non-trivial 
(\textit{i.e.}, for $D$ sufficiently small),
are in general hard to evaluate. 
For directed walks in $D=2$, (P1) is
trivial, and (P2) is called \emph{vicious walkers}. The well-known relation
with classical ensembles of random matrices gives $\a=k(k-1)/4$ in that case.
This means that we have no problems in the interesting range $0<\a<1$, except
for $k=2$, which, on $\mathbb{N} \times \mathbb{Z}$,
reduces to prefixes of Dyck
paths through a simple bijection%
\footnote{It is worth noting that,
still on $\mathbb{N} \times \mathbb{Z}$, and at generic $k$,
in the variant in which the endpoints are prescribed,
exact enumeration formulas allow for an efficient
algorithm, involving no anticipated rejection (see
\cite[Chapt.\;4]{Bonich}).
We thank the anonymous referee for pointing us
  towards this reference.}.

For undirected walks in $D=2$, conformal invariance, and even better
the connection with the exactly solvable analysis on random planar
graphs via KPZ relation (\cite{kpz}), have led to the determination of
a variety of critical exponents, which have been proven subsequently
by SLE techniques (see \cite{Dupla} for the original conjectures, and
\cite{LSW1, LSW2, LSW3} for the proofs).  As shown in \cite{LSW2}, we
have $\a=\frac{1}{24}(4k^2-1)$, in a unified formula for (P1) (using
$k=1$) and for (P2).\footnote{Incidentally, note that also in the
  directed case the formula for $\a(k)$ matches with the trivial value
  $\a=0$ for problem (P1).}
Thus we have two new problems in the interesting range: problem (P1),
following the law $\mathcal{D}(\frac{1}{8})$, and problem (P2) with
$k=2$, following the law $\mathcal{D}(\frac{5}{8})$. An example of the
latter is in Figure~\ref{fig.dupl}.

\begin{figure}[!tb]
\begin{center}
\vspace{2mm}
\includegraphics{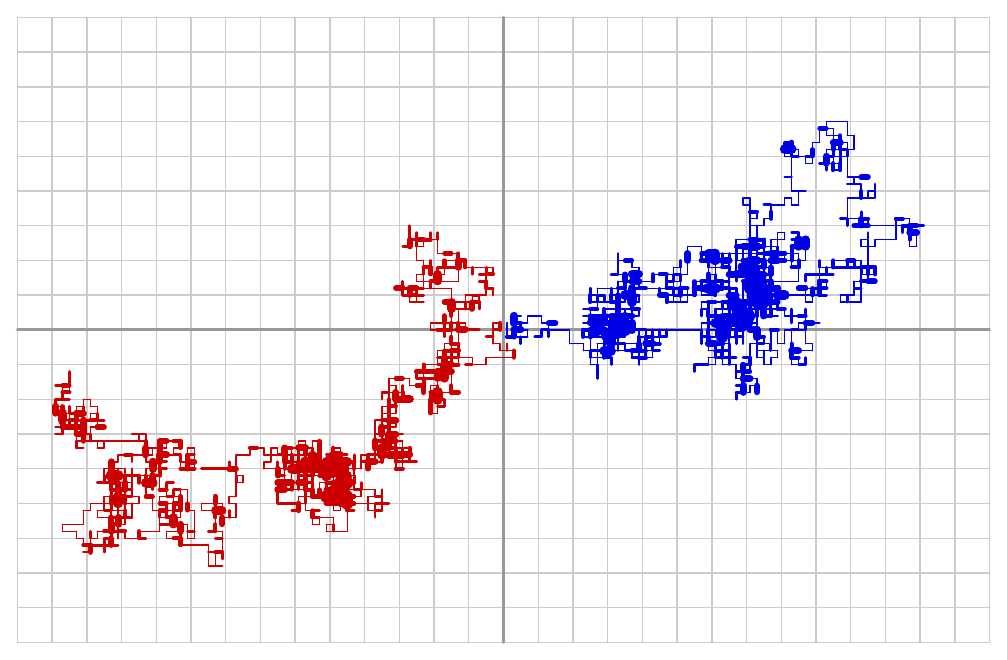}
\end{center}
\caption{\label{fig.dupl}A random sampling of two walks, each of
  length $n=2000$, on $\mathbb{Z}^2$, starting at neighbouring points
  and avoiding each other.}
\end{figure}

\bibliographystyle{abbrvnat}
\bibliography{biblio_short}

\begin{thebibliography}{19}
\providecommand{\natexlab}[1]{#1}
\providecommand{\url}[1]{\texttt{#1}}
\expandafter\ifx\csname urlstyle\endcsname\relax
  \providecommand{\doi}[1]{doi: #1}\else
  \providecommand{\doi}{doi: \begingroup \urlstyle{rm}\Url}\fi

\bibitem[Bacher(2014)]{king}
A.~Bacher.
\newblock Directed and multi-directed animals in the king's lattice.
\newblock Submitted, 2014.
\newblock URL \url{http://arxiv.org/abs/1301.1365}.

\bibitem[Bacher et~al.(2014)Bacher, Bodini, and Jacquot]{motzkin}
A.~Bacher, O.~Bodini, and A.~Jacquot.
\newblock Efficient random sampling of binary and unary-binary trees
  \textit{via} holonomic equations.
\newblock Submitted, 2014.
\newblock URL \url{http://arxiv.org/abs/1401.1140}.

\bibitem[Banderier and Flajolet(2002)]{BanFlaj}
C.~Banderier and P.~Flajolet.
\newblock Basic analytic combinatorics of directed lattice paths.
\newblock \emph{Theoretical Computer Science}, 281:\penalty0 37--80, 2002.
\newblock (special volume dedicated to M. Nivat).

\bibitem[Barcucci et~al.(1994)Barcucci, Pinzani, and Sprugnoli]{barcucci}
E.~Barcucci, R.~Pinzani, and R.~Sprugnoli.
\newblock The random generation of directed animals.
\newblock \emph{Theoret. Comput. Sci.}, 127\penalty0 (2):\penalty0 333--350,
  1994.

\bibitem[Barcucci et~al.(1995)Barcucci, Pinzani, and Sprugnoli]{barcucci2}
E.~Barcucci, R.~Pinzani, and R.~Sprugnoli.
\newblock The random generation of underdiagonal walks.
\newblock \emph{Discrete Math.}, 139\penalty0 (1-3):\penalty0 3--18, 1995.
\newblock Formal power series and algebraic combinatorics (Montreal, PQ, 1992).

\bibitem[{Bonichon}(2002)]{Bonich}
N.~{Bonichon}.
\newblock \emph{Aspects algorithmiques et combinatoires des r\'ealiseurs des
  graphes plans maximaux}.
\newblock PhD thesis, Universit\'e Sciences et Technologies -- Bordeaux I,
  2002.

\bibitem[{Bousquet-M\'elou} and {Mishna}(2010)]{bousquet}
M.~{Bousquet-M\'elou} and M.~{Mishna}.
\newblock Walks with small steps in the quarter plane.
\newblock \emph{Contemp.\ Math.}, 520:\penalty0 1--40, 2010.

\bibitem[Darling(1952)]{darling}
D.~A. Darling.
\newblock The influence of the maximum term in the addition of independent
  random variables.
\newblock \emph{Trans. Amer. Math. Soc.}, 73:\penalty0 95--107, 1952.

\bibitem[{Denisov} and {Wachtel}(2015)]{Denis}
D.~{Denisov} and V.~{Wachtel}.
\newblock Random walks in cones.
\newblock \emph{The Annals of Probability}, 43\penalty0 (3):\penalty0
  992--1044, 2015.
\newblock URL \url{http://arxiv.org/abs/1110.1254}.

\bibitem[Duplantier(1998)]{Dupla}
B.~Duplantier.
\newblock Random walks and quantum gravity in two dimensions.
\newblock \emph{Phys. Rev. Lett.}, 81:\penalty0 5489--5492, 1998.
\newblock \doi{10.1103/PhysRevLett.81.5489}.
\newblock URL \url{http://link.aps.org/doi/10.1103/PhysRevLett.81.5489}.

\bibitem[Gnedenko and Kolmogorov(1968)]{gnedenko}
B.~V. Gnedenko and A.~N. Kolmogorov.
\newblock \emph{Limit distributions for sums of independent random variables}.
\newblock Translated from the Russian, annotated, and revised by K. L. Chung.
  With appendices by J. L. Doob and P. L. Hsu. Revised edition. Addison-Wesley
  Publishing Co., Reading, Mass.-London-Don Mills., Ont., 1968.

\bibitem[Knizhnik et~al.(1988)Knizhnik, Polyakov, and Zamolodchikov]{kpz}
V.~G. Knizhnik, A.~M. Polyakov, and A.~B. Zamolodchikov.
\newblock Fractal structure of $2{D}$-quantum gravity.
\newblock \emph{Modern Physics Letters A}, 03\penalty0 (08):\penalty0 819--826,
  1988.
\newblock URL
  \url{http://www.worldscientific.com/doi/abs/10.1142/S0217732388000982}.

\bibitem[Lawler et~al.(2001{\natexlab{a}})Lawler, Schramm, and Werner]{LSW1}
G.~Lawler, O.~Schramm, and W.~Werner.
\newblock Values of brownian intersection exponents, i: Half-plane exponents.
\newblock \emph{Acta Mathematica}, 187\penalty0 (2):\penalty0 237--273,
  2001{\natexlab{a}}.
\newblock ISSN 0001-5962.
\newblock \doi{10.1007/BF02392618}.
\newblock URL \url{http://dx.doi.org/10.1007/BF02392618}.

\bibitem[Lawler et~al.(2001{\natexlab{b}})Lawler, Schramm, and Werner]{LSW2}
G.~Lawler, O.~Schramm, and W.~Werner.
\newblock Values of brownian intersection exponents, ii: Plane exponents.
\newblock \emph{Acta Mathematica}, 187\penalty0 (2):\penalty0 275--308,
  2001{\natexlab{b}}.
\newblock ISSN 0001-5962.
\newblock \doi{10.1007/BF02392619}.
\newblock URL \url{http://dx.doi.org/10.1007/BF02392619}.

\bibitem[Lawler et~al.(2002)Lawler, Schramm, and Werner]{LSW3}
G.~F. Lawler, O.~Schramm, and W.~Werner.
\newblock Values of brownian intersection exponents iii: Two-sided exponents.
\newblock \emph{Annales de l'Institut Henri Poincare (B) Probability and
  Statistics}, 38\penalty0 (1):\penalty0 109 -- 123, 2002.

\bibitem[Lew(1994)]{lew}
J.~S. Lew.
\newblock On the {D}arling-{M}andelbrot probability density and the zeros of
  some incomplete gamma functions.
\newblock \emph{Constr. Approx.}, 10\penalty0 (1):\penalty0 15--30, 1994.

\bibitem[Louchard(1999)]{louchard}
G.~Louchard.
\newblock Asymptotic properties of some underdiagonal walks generation
  algorithms.
\newblock \emph{Theoret. Comput. Sci.}, 218\penalty0 (2):\penalty0 249--262,
  1999.
\newblock Caen '97.

\bibitem[Penaud et~al.(2001)Penaud, Pergola, Pinzani, and Roques]{penaud}
J.-G. Penaud, E.~Pergola, R.~Pinzani, and O.~Roques.
\newblock Chemins de {S}chr\"oder et hi\'erarchies al\'eatoires.
\newblock \emph{Theoret. Comput. Sci.}, 255\penalty0 (1-2):\penalty0 345--361,
  2001.

\bibitem[Redner(2001)]{redner}
S.~Redner.
\newblock \emph{A guide to first-passage processes}.
\newblock Cambridge University Press, Cambridge, 2001.

\end{thebibliography}



\end{document}